\def \ra{\rightarrow}
\def \mfk{\mathfrak}
\def \mcO{\mathcal{O}}
\def \glH{H^0(C,A)}
\def \tildeX{\widetilde{X}}
\def \mbb{\mathbb}
\def \parK{A\otimes\mcO_X(-C)|_C}
\def \5tildeF{\widetilde{F}}
\def \tildeC{\widetilde{C}}
\def \pushC'{i'_*f_*}
\def \Ln{L_n}
\def \xhra{\xhookrightarrow}
\def \tildeG{\widetilde{G}}
\newcommand{\trm}{\textrm}
\newcommand{\lr}{\longrightarrow}
\newcommand{\hra}{\hookrightarrow}
\newtheorem{inducedstructureonG}{Lemma}[section]
\newtheorem{pbstable}[inducedstructureonG]{Remark}
\title{Semistability of Lazarsfeld-Mukai bundles via parabolic structures}
\author[P. Narayanan]{Poornapushkala Narayanan}
\address{Department of Mathematics, Indian Institute of Technology Madras, Chennai - 600036.}
\email{poorna.p.narayanan@gmail.com}
\theoremstyle{definition}
\thanks{Mathematics Classification numbers: 14C20, 14E20, 14J60}
\keywords{(Semi)stability, Parabolic bundles}
\newtheorem{quasiparabolic}{Definition}[section]
\newtheorem{morphismofparsheaf}[quasiparabolic]{Definition}
\newtheorem{inducedparabolicintro}[quasiparabolic]{Definition}
\newtheorem{parabolicweight}[quasiparabolic]{Definition}
\newtheorem{parabolicslope}[quasiparabolic]{Definition}
\newtheorem{parabolicstability}[quasiparabolic]{Definition}
\begin{document}
\begin{abstract}
  Our aim in this article is to produce new examples of semistable
  Lazarsfeld-Mukai bundles on smooth projective surfaces $X$ using the
  notion of parabolic vector bundles. In particular, we associate
  natural parabolic structures to any rank two (dual) Lazarsfeld-Mukai
  bundle and study the parabolic stability of these parabolic bundles.
  We also show that the orbifold bundles on Kawamata coverings of $X$
  corresponding to the above parabolic bundles are themselves certain
  (dual) Lazarsfeld-Mukai bundles. This gives semistable
  Lazarsfeld-Mukai bundles on Kawamata covers of the projective plane
  and of certain K3 surfaces.
\end{abstract}

\maketitle
\section{Introduction}
In recent years, the study of certain vector bundles called
Lazarsfeld-Mukai bundles (LM bundles) has gained
prominence. Investigating the properties of these naturally occurring
bundles is of increasing interest.  This can be attributed to the deep
applications of these bundles in Brill-Noether theory of curves,
especially those lying on K3 surfaces.

LM bundles were first used by Lazarsfeld \cite{RL} to prove Petri's
conjecture and by Mukai \cite{Mu} in the classification of certain
Fano manifolds. They have also been useful in studying the constancy
of gonality, Clifford index and Clifford dimension of smooth
projective curves belonging to ample or globally generated linear
systems on K3 surfaces \cite{CP, GL, Kn}.  Voisin's proof of the
generic Green's conjecture employed these bundles \cite{CV1,CV2}, and
Aprodu and Farkas \cite{AF} use LM bundles and their parameter spaces
while proving Green's conjecture for curves on a K3 surface.  The
(semi)stability properties of LM bundles over K3 surfaces were studied
by Lelli-Chiesa \cite{ML}, and of certain LM bundles over Jacobian
surfaces and higher dimensional varieties by us \cite{NP1,NP2}.

In this article, our aim is to produce new examples of semistable
Lazarsfeld-Mukai bundles via the general theory of parabolic vector
bundles. In particular, we associate certain parabolic vector bundles
to a rank two LM bundle and study the related notions of parabolic
stability.  We refer to $\mathcal{x}\,$\ref{parabolic-preliminaries}
for some preliminaries on parabolic vector bundles.

Suppose $X$ is a smooth projective surface over $\mbb{C}$ and
$C\xhra{i} X$ is a smooth curve. Consider a globally generated line
bundle $A$ on $C$, with $\trm{dim}\,H^0(C,A)=2$. Then the rank two LM
bundle associated to the pair $(C,A)$ is the dual of the vector bundle
$F$, where $F$ is defined by the following short exact sequence:
\begin{equation*}\label{introLMseq}
0\ra F\ra \glH\otimes \mcO_X\ra i_*A\ra 0\,. 
\end{equation*}
Note that $F$ comes in-built with a parabolic structure given by the
following filtration with associated weights $0\leq a_1<a_2<1$:
\begin{equation}
  \boxed{\mathfrak{F}_C: F|_C\supset_{a_1} \parK\supset_{a_2} 0\,.}\nonumber
\end{equation}
We thus obtain a parabolic vector bundle
$F_*=(F,\mathfrak{F}_C,a_1,a_2)$ which we call the \emph{(dual)
  parabolic LM bundle}.

Our second parabolic bundle arises as follows. Consider a point $q$ of
multiplicity one on $C$. Consider the blow up of $X$ at $q$, say
$\tildeX$. Let $\pi:\tildeX\ra X$ denote the blow down map and
$E\subset\tildeX$, the exceptional divisor. The pulled back rank two
vector bundle $\5tildeF:=\pi^*F$ on $\tildeX$ admits a parabolic
structure along the exceptional divisor $E$ by associating weights
$0\leq b_1<b_2<1$:
\begin{equation}
\boxed{\widetilde{\mathfrak{F}}_E:\5tildeF|_E=F(q)\otimes\mcO_E\supset_{b_1} M\supset_{b_2} 0\,.}\nonumber
\end{equation}  
Here, $M=\trm{kernel}(\5tildeF|_E\rightarrow H^0(C,A)\otimes \mcO_E)$.
We call $\5tildeF_*:=(\5tildeF,\widetilde{\mathfrak{F}}_E,b_1,b_2)$
the \emph{(dual) blown up parabolic LM bundle}. Refer $\mathcal{x}\,$
\ref{Parabolic structure on LM bundles} for details.

In $\mathcal{x}\,$\ref{stability of parabolic bundles}, we study the
parabolic stability of the two parabolic vector bundles mentioned
above. Let $L$ be an ample line bundle on $X$. Then we have the
following theorem.  \theoremstyle{plain}
\newtheorem{thm}{Theorem}[section]
\begin{thm}\label{introparstable}
  Suppose that the vector bundle $F$ is $\mu_L$-stable. Then the
  (dual) parabolic LM bundle $F_*$ on $X$ is parabolic $\mu_L$-stable
  if the weights $a_1$, $a_2$ are such that:
 \begin{enumerate}
 \item[(a)] $a_2-a_1 < \frac{2}{(C\cdot L)}$, if the intersection
   number $(C\cdot L)$ is even, or
 \item[(b)] $a_2-a_1< \frac{1}{(C\cdot L)}$, if $(C\cdot L)$ is odd.
 \end{enumerate}  
\end{thm}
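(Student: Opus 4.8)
The plan is to reduce parabolic $\mu_L$-stability to a single numerical inequality on the induced weight of a sub-line bundle, and then to play the integrality of $L$-degrees against the parity of $C\cdot L$. Since $F$ has rank two, every proper nonzero subsheaf has rank one, and by the usual reduction it suffices to test saturated rank-one subsheaves: saturating only raises the ordinary degree while leaving the generic behaviour along $C$ (hence the induced weight) unchanged. On the smooth surface $X$ such a saturated subsheaf is a line bundle $E\hra F$. First I would record the two global invariants that drive everything. From the defining sequence $0\ra F\ra \glH\otimes\mcO_X\ra i_*A\ra 0$ and $c_1(i_*A)=[C]$ one gets $c_1(F)=-[C]$, hence $\deg_L(F)=c_1(F)\cdot L=-(C\cdot L)$. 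Consequently the parabolic slope is
\[
\mu^{\mathrm{par}}_L(F_*)=\tfrac12\big(-(C\cdot L)+(a_1+a_2)(C\cdot L)\big).
\]
The point that makes the hypotheses (a)/(b) relevant is that $\deg_L(F)$ and $C\cdot L$ have the \emph{same parity}.

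Next I would pin down the induced parabolic weight of a sub-line bundle $E$. Restricting $\mathfrak{F}_C$ gives the flag $F|_C\supset \parK\supset 0$ with weights $a_1,a_2$, where the sub-line bundle $\parK$ carries the larger weight $a_2$ and the quotient carries $a_1$. For a rank-one $E$ the induced structure has the single weight $\alpha(E)=a_2$ when $E|_C\subseteq \parK$ (its generic fibre lies in the $a_2$-step) and $\alpha(E)=a_1$ otherwise. Thus $\mu^{\mathrm{par}}_L(E_*)=\deg_L(E)+\alpha(E)(C\cdot L)$, and the desired stability $\mu^{\mathrm{par}}_L(E_*)<\mu^{\mathrm{par}}_L(F_*)$ becomes, after clearing the factor $\tfrac12$, the single inequality
\[
2\deg_L(E)-\deg_L(F)<\big(a_1+a_2-2\alpha(E)\big)(C\cdot L).
\]

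I would then split on the value of $\alpha(E)$. If $\alpha(E)=a_1$ the right-hand coefficient is $a_2-a_1>0$, while $\mu_L$-stability of $F$ forces the integer $2\deg_L(E)-\deg_L(F)$ to be strictly negative; so the inequality is automatic and imposes no condition on the weights. All the content therefore lives in the case $\alpha(E)=a_2$, where the inequality reads $2\deg_L(E)-\deg_L(F)<-(a_2-a_1)(C\cdot L)$, with a negative integer on the left. This is where I expect the only real work: I must extract the \emph{sharpest} bound from stability rather than a crude one. Stability gives $2\deg_L(E)-\deg_L(F)\le -1$ in all cases; but when $C\cdot L$ is even, $\deg_L(F)=-(C\cdot L)$ is even, so $2\deg_L(E)-\deg_L(F)$ is an even negative integer and hence $\le -2$. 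Comparing with the right-hand side, hypothesis (b), i.e. $(a_2-a_1)(C\cdot L)<1$, settles the odd case via $2\deg_L(E)-\deg_L(F)\le -1<-(a_2-a_1)(C\cdot L)$, and hypothesis (a), i.e. $(a_2-a_1)(C\cdot L)<2$, settles the even case via $\le -2<-(a_2-a_1)(C\cdot L)$. Combining the two cases yields parabolic $\mu_L$-stability.

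The main obstacle, then, is not the reduction but the sharp case $\alpha(E)=a_2$: one must recognise that this heavier-weight configuration is the only genuinely constraining one, and then exploit the \emph{parity} of $\deg_L(F)=-(C\cdot L)$ — not merely its integrality — to gain the factor $2$ improvement in the even case that the statement advertises. A secondary point to verify carefully is that restricting attention to saturated sub-line bundles loses nothing and that their induced weight is exactly $a_1$ or $a_2$ as described, so that no non-saturated or intermediate subsheaf can violate the bound.
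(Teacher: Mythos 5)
Your proposal is correct and follows essentially the same route as the paper: reduce to saturated rank-one subsheaves, show the induced parabolic structure carries a single weight $a_1$ or $a_2$ (with $a_2$ the only constraining case), and exploit the integrality of $\deg_L$ of the subsheaf against the parity of $\deg_L(F)=-(C\cdot L)$ to get the gap of $1$ (odd case) or $2$ (even case). The only difference is cosmetic: you clear denominators and phrase the bound as $2\deg_L(E)-\deg_L(F)\le -1$ or $\le -2$, where the paper writes $\mu_L(G)\le\mu_L(F)-\tfrac12$ or $\mu_L(G)\le\mu_L(F)-1$.
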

Next, consider the line bundle
$\Ln := n \pi^*L \otimes \mcO_{\tildeX}(-E)$ on $\tildeX$ which is
ample for $n$ sufficiently large. In this case, we have the following
theorem.
\begin{thm}\label{introparstable2}
  Suppose that $F$ is $\mu_L$-stable. Then, there is an integer $n_0$
  such that for all $n\geq n_0$, $\5tildeF_*$ is parabolic
  $\mu_{\Ln}$-stable for all weights $b_1$ and $b_2$.
 \end{thm}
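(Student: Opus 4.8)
The plan is to verify the defining inequality of parabolic $\mu_{\Ln}$-stability against rank-one saturated subsheaves $G\subset\widetilde{F}=\pi^*F$, which suffices because $\widetilde{F}$ has rank two. First I would use $\trm{Pic}(\tildeX)=\pi^*\trm{Pic}(X)\oplus\mbb{Z}[E]$ to write $c_1(G)=\pi^*G_0+mE$ with $G_0\in\trm{Pic}(X)$ and $m\in\mbb{Z}$. From $E\cdot\pi^*L=0$, $E^2=-1$ and the projection formula one gets $E\cdot\Ln=1$, $\deg_{\Ln}(\widetilde{F})=n\,(c_1(F)\cdot L)$ and $\deg_{\Ln}(G)=n\,(G_0\cdot L)+m$. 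The induced weight of $G$ along $E$ is some $\beta\in\{b_1,b_2\}$, depending on whether $G|_E\subseteq M$. Hence, writing $c:=c_1(F)\cdot L$ and $s:=G_0\cdot L$, the inequality $\trm{par-}\mu_{\Ln}(G)<\trm{par-}\mu_{\Ln}(\widetilde{F})$ becomes
\[
n\Big(s-\tfrac{c}{2}\Big)+m+\Big(\beta-\tfrac{b_1+b_2}{2}\Big)<0.
\]

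Next I would control the coefficient of $n$. Pushing the inclusion forward yields $\pi_*G\hookrightarrow\pi_*\pi^*F=F$, a rank-one subsheaf with $c_1(\pi_*G)=G_0$ since $\pi_*E=0$; saturating inside $F$ and using that $F$ is $\mu_L$-stable gives $s=G_0\cdot L<\tfrac{c}{2}$. As $s\in\mbb{Z}$ and $c/2$ is fixed, the gap is bounded below by a constant $\delta_0>0$ independent of $G$, so $\tfrac{c}{2}-s\ge\delta_0$. Because $\beta-\tfrac{b_1+b_2}{2}\in(-1,1)$ for every choice of weights, the whole problem reduces to bounding the exceptional multiplicity $m$ against $n$.

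The hard part is exactly this bound on $m$, since subsheaves with large $E$-twist do occur: negating the displayed inequality forces $m\ge n\big(\tfrac{c}{2}-s\big)-1\ge n\delta_0-1$, which grows with $n$. To rule such $G$ out I would use the discriminant. From the sequence $0\to G\to\widetilde{F}\to Q\to 0$ with $Q$ torsion-free of rank one one has $c_1(G)\cdot c_1(Q)\le c_2(\widetilde{F})=c_2(F)$, and expanding via $E^2=-1$ this reads
\[
m^2\le c_2(F)-G_0\cdot c_1(F)+G_0^{\,2}.
\]
The Hodge index theorem gives $G_0^{\,2}\le s^2/L^2$, and bounding the cross term $G_0\cdot c_1(F)$ the same way shows the right-hand side is $O(1+s^2)$. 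Comparing with $m^2\ge\big(n(\tfrac{c}{2}-s)-1\big)^2$, a destabilizer would force $n^2\big(\tfrac{c}{2}-s\big)^2=O(1+s^2)$; this is impossible for large $n$ whether $s$ stays bounded (left side $\ge n^2\delta_0^2$) or $s\to-\infty$ (both sides of order $s^2$, forcing $n^2=O(1)$). Therefore there is an $n_0$, depending only on $F$ and $L$ and not on $b_1,b_2$, such that no rank-one subsheaf destabilizes $\widetilde{F}_*$ for $n\ge n_0$, which is the asserted parabolic $\mu_{\Ln}$-stability.
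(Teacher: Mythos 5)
Your proposal is correct, but it takes a genuinely different route from the paper. The paper's proof rests on two ingredients: first, it quotes Nakashima's result (\cite[Prop.~2.2(1)]{Nak}, recorded in Remark~\ref{pbstable}) that $\pi^*F$ is already $\mu_{\Ln}$-stable for $n\gg 0$; second, a short integrality argument absorbs the parabolic weights, namely $\mu_{\Ln}(\tildeG)=c_1(\tildeG)\cdot\Ln\in\mathbb{Z}$ while $\mu_{\Ln}(\pi^*F)=-n(C\cdot L)/2$, so the ordinary slope gap is at least $\tfrac12$, and the parabolic correction $\beta-\tfrac{b_1+b_2}{2}=\pm\tfrac{b_2-b_1}{2}$ has absolute value strictly less than $\tfrac12$ for every admissible choice of weights --- hence stability persists with no constraint on $b_1,b_2$. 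You instead bypass Nakashima entirely: the decomposition $c_1(G)=\pi^*G_0+mE$, the pushforward $\pi_*G\hookrightarrow F$ combined with $\mu_L$-stability of $F$ to force $\tfrac{c}{2}-s\geq\tfrac12$, and the discriminant inequality $c_1(G)\cdot c_1(Q)\leq c_2(F)$ together with Hodge index to exclude large exceptional multiplicity $m$, amount to a self-contained proof of the rank-two case of Nakashima's proposition with the parabolic term carried along. What your approach buys is independence from the citation and an explicit handle on $n_0$ in terms of $c_2(F)$, $c_1(F)\cdot L$ and $L^2$; what it costs is length, and one step stated too quickly: bounding the cross term $G_0\cdot c_1(F)$ ``the same way'' actually requires splitting $G_0$ into its $L$-component and $L$-orthogonal component, using negative definiteness on $L^{\perp}$ and Cauchy--Schwarz, and then observing that the resulting $C\sqrt{u}-u$ contribution is bounded above; this does go through, but it should be written out. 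Either argument proves the theorem.
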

 In \cite{IB}, Biswas established a two way relationship between
 parabolic vector bundles and orbifold vector bundles. Motivated by
 this, in $\mathcal{x}\,$\ref{Orbifold LM bundle}, we obtain the
 orbifold bundle on Kawamata covers of $X$ associated to the (dual)
 parabolic LM bundle $F_*$ for weights of the form $a_1=0$ and
 $a_2=\frac{N-m}{N}$, where $1\leq m<N$ are positive integers. Let
 $p:Y\ra X$ be the Kawamata covering of $X$ such that $p^*C$ is a
 non-reduced divisor $NC'$ with $C':= (p^*C)_{\text{red}}$ a smooth
 curve on $Y$.  Then we have the following theorem.
\begin{thm}\label{intro-parabolic-orbifold}
  The orbifold vector bundle on $Y$ corresponding to the (dual)
  parabolic LM bundle $(F,\mfk{F}_C,0,\frac{N-m}{N})$ on $X$ is the
  dual LM bundle on $Y$, say $F'$, associated to the triple
  $(mC',A', H^0(C,A))$ given by the short exact sequence
$$0\ra F'\ra H^0(C,A)\otimes \mcO_Y\ra j'_*A'\ra 0\,.$$
Here $A'$ is the pullback of the line bundle $A$ 
from $C$ to the curve $mC'\xhra{j'}Y$ (the curve $mC'$ is non-reduced when $m>1$).
\end{thm}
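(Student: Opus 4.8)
The plan is to realize $F'$ as an explicit Hecke modification of $p^*F$ along $C'$ and then run Biswas's correspondence \cite{IB} in reverse, verifying that the parabolic bundle attached to the orbifold bundle $F'$ is precisely $(F,\mfk{F}_C,0,\frac{N-m}{N})$. Write $\Gamma$ for the Galois group of $p\colon Y\ra X$ and $N$ for the order of ramification along $C$, so $p^*C=NC'$. Since $p$ is finite and flat, pulling back the defining sequence $0\ra F\ra \glH\otimes\mcO_X\ra i_*A\ra 0$ remains exact, and using $p^{-1}(C)=NC'$ one identifies $p^*(i_*A)=i'_*A_N$, the pullback of $A$ to the non-reduced scheme $NC'\xhra{i'}Y$. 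As $m<N$, the subscheme $mC'$ is cut out inside $NC'$; the restriction $i'_*A_N\ra j'_*A'$ intertwines the two evaluation maps, and comparing kernels yields $p^*F\subseteq F'$ with
$$0\ra p^*F\ra F'\ra i''_*\bigl(A_{N-m}\otimes\mcO_Y(-mC')\bigr)\ra 0,$$
where $A_{N-m}$ is the pullback of $A$ to $(N-m)C'\xhra{i''}Y$. Because $\glH$, $\mcO_Y$ and the pulled-back sheaf $A'$ carry canonical $\Gamma$-linearizations, $F'=\ker(\glH\otimes\mcO_Y\ra j'_*A')$ is $\Gamma$-equivariant, i.e.\ an orbifold bundle on $Y$.

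First I would identify the underlying bundle $(p_*F')^\Gamma$. Applying the exact functor $(p_*(-))^\Gamma$ to the sequence above, the projection formula gives $(p_*p^*F)^\Gamma=F\otimes(p_*\mcO_Y)^\Gamma=F$, while the torsion term drops out: in the normal coordinate the sheaf $A_{N-m}\otimes\mcO_Y(-mC')$ is spanned by the powers $w^m,\dots,w^{N-1}$, on which $\Gamma$ acts through nontrivial characters (the exponents $m,\dots,N-1$ are all nonzero modulo $N$), so its invariants vanish. Hence $(p_*F')^\Gamma\cong F$, matching the prescribed underlying bundle.

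Next I would recover the parabolic datum along $C$ by a local computation, working with the cyclic model valid near a general point of $C$: coordinates $(x,z)$ on $X$ with $C=\{z=0\}$ and $(x,w)$ on $Y$ with $z=w^N$, and $\Gamma=\langle\sigma\rangle$ acting by $\sigma^*w=\zeta w$, $\zeta=e^{2\pi i/N}$. Trivializing $A$ and writing $\glH=\langle s_1,s_2\rangle$ so that evaluation sends $(f_1,f_2)\mapsto f_1+gf_2$, one obtains frames $v_1=(-g,1),\,v_2=(z,0)$ of $F$ and $u_1=(-g,1),\,u_2=(w^m,0)$ of $F'$, with $v_2=w^{N-m}u_2$; here $\sigma$ fixes $u_1$ and scales $u_2$ by $\zeta^{-m}=e^{2\pi i(N-m)/N}$, so the two isotypic directions already carry the weights $0$ and $\tfrac{N-m}{N}$. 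To match these with the weights produced by the inverse functor, I would compute the invariant twisted pushforwards $\bigl(p_*(F'\otimes\mcO_Y(-jC'))\bigr)^\Gamma\subseteq(p_*F')^\Gamma=F$ and their images in $F|_C=F/zF$: a direct check shows this image is all of $F|_C$ for $j=0$, equals $K=\parK$ for $1\le j\le N-m$, and is $0$ for $j>N-m$. Thus the induced filtration is $F|_C\supset_{0}K\supset_{(N-m)/N}0$, which is exactly $\mfk{F}_C$ with $a_1=0$, $a_2=\tfrac{N-m}{N}$; since Biswas's construction is an equivalence, this identifies $F'$ with the orbifold bundle attached to $(F,\mfk{F}_C,0,\frac{N-m}{N})$.

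The hard part is the bookkeeping over the non-reduced thickenings $mC'$ and $NC'$: one must justify $p^*(i_*A)=i'_*A_N$ and the factorization of evaluation through $j'_*A'$ even though $A'$ lives on a non-reduced curve, and then track powers of the normal coordinate $w$ carefully enough to align each $\Gamma$-character $\zeta^{k}$ with the correct parabolic weight $k/N$ (rather than $(N-k)/N$) in the convention of \cite{IB}. Once the local model is set up and this character/weight convention is pinned down, the remaining steps are the routine rank and invariance computations indicated above.
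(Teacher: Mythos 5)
Your proposal is correct in substance and reaches the right answer, but it computes the key objects by a genuinely different method than the paper. Both arguments run Biswas's inverse correspondence by computing the filtration $E_t=\big(p_*(F'\otimes\mcO_Y([-tN]C'))\big)^{\Gamma}$; the difference is in how those invariant pushforwards are evaluated. The paper works entirely globally: it pushes forward the defining sequence of $F'$ and the restriction sequences $0\ra F'(-aC')\ra F'(-(a-1)C')\ra (F'(-(a-1)C'))|_{C'}\ra 0$, and kills the unwanted terms with a vanishing lemma $(i_*\psi_*(\psi^*A\otimes\mcO_Y(-mC')|_{C'}))^{\Gamma}=0$, identifying each step of the filtration with a named global sheaf ($F$, then $\glH\otimes\mcO_X(-C)$, then $F(-C)$) by comparison with the sequences \eqref{LMseq} and \eqref{dualLMseq}. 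You instead (i) exhibit $F'$ as a Hecke modification of $p^*F$ via $0\ra p^*F\ra F'\ra i''_*(A_{N-m}\otimes\mcO_Y(-mC'))\ra 0$ --- a clean way to get $(p_*F')^{\Gamma}\simeq F$ that the paper does not use --- and (ii) compute the twisted invariants in the local cyclic model $z=w^N$ by decomposing into $\Gamma$-characters; I checked that your claimed images in $F|_C$ (all of $F|_C$ for $j=0$, $\parK$ for $1\le j\le N-m$, zero for $j>N-m$) are exactly what this computation gives, matching the paper's global answer. What your route buys is transparency about where the weight $\frac{N-m}{N}$ comes from (the character $\zeta^{\pm m}$ on the $u_2$-direction); what the paper's route buys is that it never has to fix the character-to-weight sign convention you flag as unresolved, and it avoids the two points where your argument still needs a word of care: the cyclic local model holds only away from the other branch divisors of the Kawamata cover (so one should either check those points or note that the sheaves $E_t$ and their images in $F|_C$ are saturated, hence determined generically along $C$), and the identification of the middle step with the global sheaf $\glH\otimes\mcO_X(-C)$ (which the paper needs for its Lemma \ref{parabolicstructurealt} reformulation) requires patching your local frames $zv_1,v_2$ into the evaluation sequence.
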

It is interesting to observe that the orbifold bundles are themselves
certain dual LM bundles on the Kawamata covers. This further
highlights that LM bundles are vector bundles with desirable
properties.

As a consequence of the above correspondence, we obtain new examples
of semistable LM bundles on the Kawamata covers of $\mbb{P}^2$ and K3
surfaces in $\mathcal{x}\,$\ref{semist-appl-section}.
\begin{thm}\label{intronewsemistLMbundles}
  Suppose $X$ is a smooth projective surface and $C$ is a smooth curve
  on $X$.  Let $A$ be an ample line bundle on $C$ and
  $V\subset H^0(C,A)$ be a general two dimensional subspace with the
  linear series $\mbb{P}V$ base-point free. For any positive integer
  N, consider the Kawamata covering $p:Y\ra X$ of $X$ such that $p^*C$
  is a non-reduced divisor $NC'$ with $C':=(p^*C)_{\emph{red}}$ a
  smooth curve on $Y$. Let $1\leq m < N$ be an integer and $A'$ denote
  the pullback of $A$ from $C$ to (the possibly non-reduced curve)
  $mC'$.
\begin{enumerate}
\item[(a)] Consider $X=\mbb{P}^2$ and $C\in |\mcO(d)|$, where $d$ is
  odd. Set $A$ to be of the form $\mcO(rd)|_C$ for $r\geq 1$.  Choose
  $m$ such that $\frac{N-m}{N}<\frac{1}{d}$. Then the LM bundle on $Y$
  associated to the triple $(mC',A',V)$ is
  $\mu_{p^*\mcO(1)}$-semistable.
\item[(b)] Let $X$ be a smooth projective K3 surface and $L$ be an
  ample line bundle on $X$ such that a general curve $C\in |L|$ has
  genus $g$, Clifford dimension 1 and maximal gonality. Let $A$ be a
  complete base-point free $g^1_d$ on $C$ where the Brill Noether
  number $\rho(g,1,d)>0$. Set $V=H^0(C,A)$. Let $m$ be such that
  $\frac{N-m}{N}<\frac{1}{g-1}$. Then the LM bundle on $Y$ associated
  to the triple $(mC',A',H^0(C,A))$ is $\mu_{p^*L}$-semistable.
 \end{enumerate}
 We have the following as particular cases of (a) and (b)
 respectively. Let $l=\trm{deg}\, A'$.
 \begin{enumerate}
 \item[(a')] Let $d=1$, i.e. $C\in|\mcO(1)|$ is a line and set $m=1$.
   Then there is an irreducible component of the Brill-Noether variety
   $\mathcal{G}^1_l(|\mcO_Y(C')|)$ corresponding to
   $\mu_{p^*\mcO(1)}$-semistable LM bundles.
  \item[(b')] Suppose $X$ be a smooth projective K3 surface and $L$ is as above
 which in addition satisfies $(L^2)=2$. 
 Let $m=1$. Then there is an irreducible component
 of the Brill-Noether variety $\mathcal{G}^1_l(|\mcO_Y(C')|)$ corresponding
 to $\mu_{p^*L}$-semistable LM bundles. 
\end{enumerate}
\end{thm}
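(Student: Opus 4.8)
The plan is to run a single chain of implications and specialize it to each geometric setting. Write $F$ for the (dual) LM bundle attached to the triple $(C,A,V)$ on $X$, and take the weights $a_1=0$, $a_2=\frac{N-m}{N}$ forced by Theorem~\ref{intro-parabolic-orbifold}. If $F$ is $\mu_L$-stable, then Theorem~\ref{introparstable} makes the parabolic LM bundle $F_*=(F,\mathfrak{F}_C,0,\frac{N-m}{N})$ parabolic $\mu_L$-stable, as soon as the weight gap $a_2-a_1=\frac{N-m}{N}$ satisfies the bound dictated by the parity of $(C\cdot L)$. By Theorem~\ref{intro-parabolic-orbifold}, the orbifold bundle on the Kawamata cover $p:Y\ra X$ corresponding to $F_*$ is precisely the dual LM bundle $F'$ attached to $(mC',A',V)$. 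Finally, under Biswas's correspondence \cite{IB} parabolic $\mu_L$-stability of $F_*$ is the same as orbifold $\mu_{p^*L}$-stability of $F'$; an orbifold-stable bundle has polystable---in particular $\mu_{p^*L}$-semistable---underlying bundle, and since (semi)stability is preserved under dualizing, the LM bundle on $Y$ is $\mu_{p^*L}$-semistable as well. Everything thus reduces to (i) matching the weight bound with the parity of $(C\cdot L)$, and (ii) verifying $\mu_L$-stability of $F$ in each case.

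For part (a) one has $(C\cdot L)=(C\cdot\mcO(1))=d$, which is odd, so Theorem~\ref{introparstable}(b) applies and the required bound is exactly $\frac{N-m}{N}<\frac{1}{d}$, the stated hypothesis. It then remains to check that $F$, the dual LM bundle of $(C,\mcO(rd)|_C,V)$, is $\mu_{\mcO(1)}$-stable. Here I would compute from the defining sequence that $c_1(F)=-dH$ and $\trm{rk}\,F=2$, so $\mu_{\mcO(1)}(F)=-d/2$; since $d$ is odd, $\gcd(\deg F,\trm{rk}\,F)=1$, whence no sub-line-bundle can have slope equal to $\mu(F)$ and stability is equivalent to the absence of a destabilizing sub-line-bundle. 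A destabilizing $\mcO(-k)\hra F\hra V\otimes\mcO_{\mbP^2}$ would force $k\leq (d-1)/2$ and produce a pencil section contradicting the base-point-freeness and the generality of $\mbP V$; I expect this geometric step (or an appeal to the stability results in \cite{NP1,NP2}) to carry the real content of part (a).

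For part (b), the curve $C\in|L|$ on the K3 surface satisfies $(C\cdot L)=(L^2)=2g-2$ by adjunction, which is even, so Theorem~\ref{introparstable}(a) applies and the bound becomes $\frac{N-m}{N}<\frac{2}{2g-2}=\frac{1}{g-1}$, again exactly the imposed hypothesis. The input $\mu_L$-stability of the dual LM bundle of a complete base-point-free $g^1_d$ is now supplied by Lelli-Chiesa's stability criterion \cite{ML}, whose hypotheses---Clifford dimension one, maximal gonality, and $\rho(g,1,d)>0$---are precisely those assumed here. Feeding this into the chain yields $\mu_{p^*L}$-semistability of the LM bundle on $Y$ associated to $(mC',A',H^0(C,A))$.

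Finally, for (a$'$) and (b$'$) I would specialize $m=1$ (so that $mC'=C'$) together with $d=1$, respectively $(L^2)=2$, and invoke the LM-bundle/Brill-Noether dictionary: the dual LM bundles attached to pencils on curves in $|\mcO_Y(C')|$ sweep out a locus inside $\mathcal{G}^1_l(|\mcO_Y(C')|)$, and the parameter space of such pencils is irreducible and nonempty since $\rho>0$ guarantees existence of the relevant $g^1_l$ on the general curve. Because parts (a) and (b) show the general such bundle to be $\mu_{p^*\mcO(1)}$- (resp. $\mu_{p^*L}$-) semistable, and semistability is an open condition, the corresponding irreducible component of $\mathcal{G}^1_l(|\mcO_Y(C')|)$ generically parametrizes semistable LM bundles, as claimed. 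The main obstacle throughout is step (ii), namely the hands-on $\mu_{\mcO(1)}$-stability verification on $\mbP^2$ in part (a), since the K3 case can be outsourced to \cite{ML}; a secondary point to confirm is that Biswas's correspondence genuinely delivers $\mu_{p^*L}$-semistability of the underlying bundle on $Y$, rather than only an equivariant statement.
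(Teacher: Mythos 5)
Your proposal is correct and follows essentially the same route as the paper: $\mu_L$-stability of $F$ (from \cite{NP2} on $\mbb{P}^2$, from \cite{ML} on K3) feeds into Theorem \ref{introparstable} with the parity of $(C\cdot L)$ giving exactly the stated weight bounds, then Theorem \ref{intro-parabolic-orbifold} together with Biswas's parabolic--orbifold semistability correspondence (\cite[Lemmas 3.13 and 2.7]{IB}) transfers this to $\mu_{p^*L}$-semistability of $F'$ on $Y$, and (a$'$), (b$'$) follow from openness of semistability in the flat Brill--Noether family. The only divergence is that you sketch a hands-on stability argument for $F$ on $\mbb{P}^2$ where the paper simply cites \cite[Theorem 1.2]{NP2}, which you also correctly identify as sufficient.
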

\subsection*{Acknowledgements}
I thank Dr. Jaya NN Iyer for introducing me to this problem and for
guiding me during the course of this project. I also thank
Dr. T. E. Venkata Balaji and Prof. D. S. Nagaraj for helpful
discussions.
%%%%%%%%%%%%%%%%%%%%%%%%%%%%%%%%%%%%%%%%%%%%%%%%%%%%%%%%%%%%%%%%%%%%%%%%%%
\section{Preliminaries}\label{parabolic-preliminaries}
\subsection{Parabolic sheaves}
Parabolic bundles were first introduced over curves by Seshadri
\cite{Se} and Mehta \cite{MS}. This was generalized to higher
dimensions by Bhosle \cite{UB} and by Maruyama-Yokogawa \cite{MY}.
Consider a connected smooth projective variety $X$ and an effective
divisor $D$ on $X$.
\begin{quasiparabolic}\label{qparabolicintro}
 Let $E$ be a coherent torsion-free $\mcO_X$-module. A \emph{quasi-parabolic structure} 
 on $E$ along $D$ is defined by giving a filtration of $E|_D$ of the form:
 $$\mfk{F}_D:E|_D=F_1(E|_D)\supset F_2(E|_D)\supset\cdots \supset F_l(E|_D)\supset F_{l+1}(E|_D)=0\,.$$
 A \emph{parabolic structure} on $E$ is a quasi-parabolic structure as
 above together with a system of \emph{parabolic weights}
 $a_1,a_2,\cdots,a_l$ such that $0\leq a_1<a_2<\cdots<a_l<1$.
\end{quasiparabolic}
We denote $E$ with the parabolic structure by $(E,\mfk{F}_D,a_*)$ or
by $E_*$.  If $E$ is locally free, then $E_*$ is called a
\emph{parabolic bundle}.

In the following definitions, let $E$ be a torsion-free coherent sheaf
with parabolic structure $(E,\mfk{F}_D,a_i(E))$ where $i=1,2,\ldots,l$
and
 \begin{equation*}\label{todefinebasicparabolic}
   \mfk{F}_D:E|_D=F_1({E}|_D)\supset F_2({E}|_D)\supset\cdots \supset F_l({E}|_D)\supset F_{l+1}({E}|_D)=0\,. 
 \end{equation*}
 \begin{morphismofparsheaf}\label{morphismofparsheaf}\cite[Definition
   1.10]{UB} Suppose that $G$ is another torsion-free coherent sheaf
   with quasi-parabolic structure along $D$ prescribed by the
   filtration $\mfk{F}'_D$ of ${G}|_D$:
 $$\mfk{F}'_D:G|_D=F_1(G|_D)\supset F_2(G|_D)\supset\cdots \supset F_r(G|_D)\supset F_{r+1}(G|_D)=0\,. $$
 together with weights $\{a_i(G)\}$. \emph{A morphism of parabolic
   sheaves} $G\ra E$ is a morphism of sheaves $g:G\ra E$ such that
 whenever $a_i(G)>a_j(E)$, then
 $g|_D(F_i(G|_D))\subset F_{j+1}(E|_D)$.
\end{morphismofparsheaf}
Consider now a subsheaf $F\hra E$ such that the quotient $Q=E/F$ is
torsion-free. In this case, $F|_D\ra E|_D$ is an injection
(cf. \cite[$\mathcal{x}\,$1.8]{UB}). Then we have the following
induced parabolic structure on the subsheaf $F$.
 \begin{inducedparabolicintro}\label{inducedparabolicintro} \cite[$\mathcal{x}\,$1.8]{UB}
   Let $e:F|_D\hra E|_D$ denote the inclusion. We define a
   quasi-parabolic structure on $F$ along $D$ by pulling back the flag
   $\mfk{F}_D$ by $e$.
 \begin{displaymath}
   \xymatrix{\mfk{F}_D:E|_D=F_1(E|_D) & F_2(E|_D)\ar@{_{(}->}[l]\cdots & F_l(E|_D) \ar@{_{(}->}[l]  & 0 \ar@{_{(}->}[l]\\
     e^{-1}(F_1(E|_D))=F|_D\ar@{^{(}->}[u] & e^{-1}(F_2(E|_D))\ar@{_{(}->}[l]\ar@{^{(}->}[u]\cdots & e^{-1}(F_l(E|_D))\ar@{_{(}->}[l]\ar@{^{(}->}[u] & 0\ \ar@{_{(}->}[l]}
 \end{displaymath}
 Define the flag $\mfk{F}'_D$ of $F|_D$ by choosing the irredundant
 terms of the filtration and call them $F_j(F|_D)$ where
 $j\in J\subset\{1,2,\ldots,l\}$. The weight $a_j(F)$ associated to
 $F_j(F|_D)$ is determined as follows. If $i$ is the largest integer
 such that $F_j(F|_D)=e^{-1}(F_i(E|_D))$, then define
 $a_j(F):=a_i(E)$. This parabolic structure on $F$ is called the
 \emph{induced parabolic structure}.
\end{inducedparabolicintro}
With this definition, the inclusion $F\hra E$ is in fact a morphism of
parabolic sheaves (as in Definition \ref{morphismofparsheaf}).  Hence
$F$ is \emph{parabolic subsheaf} of $E$.  The induced structure on $F$
is the ``maximum parabolic structure'' making the inclusion morphism a
morphism of parabolic sheaves.  We next define the concepts of
parabolic slope and parabolic stability. Let $L$ be an ample line
bundle on $X$ with $\text{dim}\, X = n$.
\begin{parabolicweight}\label{parabolicweight}
  The \emph{parabolic weight} of $E$ denoted by $\mu\trm{-wt}(E)$ is
  defined as:
 $$\mu\trm{-wt}(E)=\sum_{i=1}^l a_i(E)[r(F_i(E|_D))-r(F_{i+1}(E|_D))]D\cdot L^{n-1}.$$
 Here $r(F_i(E|_D))$ denotes the rank of $F_i(E|_D)$.
\end{parabolicweight}
The motivation for the definition is the following.  Let $i:D\hra X$
denote the inclusion. Then $c_1(i_*F_i(E|_D))=r(F_i(E|_D))D$,
cf. \cite[Lemma 2.2]{UB}.  Thus the above definition is the same as:
$$\mu\trm{-wt}(E)=\sum_{i=1}^l a_i(E)[c_1(i_*F_i(E|_D))\cdot L^{n-1}-c_1(i_*F_{i+1}(E|_D))\cdot L^{n-1}]\,.$$
\begin{parabolicslope}
  The \emph{parabolic slope} of $E$ with respect to $L$ is:
 $$\trm{par}\mu_L(E)=\frac{c_1(E)\cdot L^{n-1}+\mu\trm{-wt}(E)}{\trm{rank}\,E}\,.$$
\end{parabolicslope}
\begin{parabolicstability}\label{parabolicstability}
  Let $E$ be a parabolic sheaf as above. Then $E$ is said to be
  \emph{parabolic} $\mu_L$-\emph{semistable} (resp. \emph{parabolic}
  $\mu_L$-\emph{stable}) if for every subsheaf $F$ of $E$ with
  torsion-free quotient such that $0<\trm{rank}\,F<\trm{rank}\,E$
  equipped with the \emph{induced parabolic structure}, one has the
  inequality of slopes, $\trm{par}\mu_L(F)\leq \trm{par}\mu_L(E)$
  (resp.  $\trm{par}\mu_L(F)<\trm{par}\mu_L(E))\,.$
\end{parabolicstability}
\theoremstyle{definition}
\newtheorem{orbbundle}[quasiparabolic]{Definition}
\newtheorem{orbstable}[quasiparabolic]{Definition}
\theoremstyle{plain}
\newtheorem{orbstableisstable}[quasiparabolic]{Proposition}
\subsection{Orbifold sheaves} Let $Y$ be a connected smooth projective
variety of dimension $n$. Let $\trm{Aut}(Y)$ denote the group of
algebraic automorphisms of $Y$. Let $G$ be a finite group and
$\rho:G\ra \trm{Aut}(Y)$ be an injective group homomorphism.
\begin{orbbundle}
  An \emph{orbifold sheaf} on $Y$ is a coherent torsion-free sheaf $E$
  on $Y$ together with a lift of the action of $G$ to $E$. That is,
  $G$ acts on the total space of stalks of $E$ and the automorphism of
  the space of stalks for the action of any $g \in G$ is a coherent
  sheaf isomorphism between $E$ and $\rho(g^{-1})^*E$. If $E$ is
  locally free, then $E$ is called an \emph{orbifold bundle}.
\end{orbbundle}
A subsheaf $F$ of an orbifold sheaf $E$ with torsion-free quotient
$E/F$ is said to be $G$-saturated if $F$ is left invariant by the
action of $G$. Hence $F$ has an induced orbifold sheaf structure. Let
$\widetilde{L}\in\trm{Pic}(Y)$ be an ample line bundle which is also
an orbifold bundle.  Recall that the usual Mumford-Takemoto slope of
$E$ with respect to $\widetilde{L}$ is
$\mu_{\widetilde{L}}(E)=\frac{c_1(E)\cdot
  \widetilde{L}^{\trm{dim}\,Y-1}}{\text{rank}\,E}$.
\begin{orbstable}
  An orbifold sheaf $E$ is said to be orbifold semistable (resp.
  orbifold stable) if for all $G$-saturated subsheaves $F$ of $E$ such
  that $0<\trm{rank}\,F<\trm{rank}\,E$ one has
  $\mu_{\widetilde{L}}(F)\leq\mu_{\widetilde{L}}(E)$
  (resp. $\mu_{\widetilde{L}}(F)< \mu_{\widetilde{L}}(E)$).
\end{orbstable}
Note that if $E$ is an orbifold sheaf which is $\mu_{\widetilde{L}}$-
(semi)stable, then clearly $E$ is orbifold (semi)stable.  There is a
converse statement in case of semistability which is as follows.
\begin{orbstableisstable}\cite[Lemma 2.7]{IB}
 An orbifold bundle is orbifold semistable if and only if it is
 semistable in the sense of Mumford-Takemoto.
\end{orbstableisstable}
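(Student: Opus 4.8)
The plan is to prove the two implications separately, noting that one is immediate and the entire content lies in the converse. If $E$ is $\mu_{\widetilde{L}}$-semistable in the Mumford--Takemoto sense, then the inequality $\mu_{\widetilde{L}}(F)\le\mu_{\widetilde{L}}(E)$ holds for \emph{every} subsheaf $F$ with torsion-free quotient, and in particular for the $G$-saturated ones; hence $E$ is orbifold semistable. This is precisely the observation recorded immediately before the statement, so I will concentrate on showing that orbifold semistability implies Mumford--Takemoto semistability.

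For the converse I would argue by contradiction: suppose $E$ is orbifold semistable but \emph{not} Mumford--Takemoto semistable. First I would invoke the existence and uniqueness of the Harder--Narasimhan filtration for a torsion-free sheaf with respect to the ample polarization $\widetilde{L}$. Failure of semistability then produces a nonzero \emph{maximal destabilizing subsheaf} $F\subset E$, namely the first term of the HN filtration. It is canonically characterized: $E/F$ is torsion-free, the slope satisfies $\mu_{\widetilde{L}}(F)>\mu_{\widetilde{L}}(E)$, and among all subsheaves attaining the maximal slope it is the unique one of maximal rank. The crucial feature I would exploit is this \emph{uniqueness}.

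Next I would bring in the $G$-action. Since $\widetilde{L}$ is itself an orbifold line bundle, each automorphism $\rho(g):Y\to Y$ satisfies $\rho(g)^*\widetilde{L}\cong\widetilde{L}$; consequently the action preserves both the rank and the $\widetilde{L}$-degree of any subsheaf, and therefore the slope $\mu_{\widetilde{L}}$. The lift of the $G$-action to $E$ thus carries $F$ to another subsheaf $g\cdot F\subset E$ with the same rank and $\mu_{\widetilde{L}}(g\cdot F)=\mu_{\widetilde{L}}(F)$. Hence $g\cdot F$ is again a maximal destabilizing subsheaf for every $g\in G$, and by the uniqueness noted above I conclude $g\cdot F=F$ for all $g$. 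In other words $F$ is left invariant by $G$, i.e. it is $G$-saturated.

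Finally, $F$ is then a $G$-saturated subsheaf with $0<\trm{rank}\,F<\trm{rank}\,E$ and $\mu_{\widetilde{L}}(F)>\mu_{\widetilde{L}}(E)$, directly contradicting the orbifold semistability of $E$. This contradiction forces $E$ to be Mumford--Takemoto semistable, completing the converse. The hard part will be the middle step: checking carefully that the lifted $G$-action preserves slopes (which rests on the orbifold structure of the polarization $\widetilde{L}$) and that the maximal destabilizing subsheaf is genuinely canonical, so that $G$-equivariance forces its invariance. Once these two ingredients are secured, the contradiction is automatic, and I would remark that the same argument fails for stability precisely because the destabilizing subsheaf of a strictly semistable sheaf need not be unique.
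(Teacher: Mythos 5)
Your proposal is correct: the paper itself gives no proof of this statement, merely citing Biswas \cite[Lemma 2.7]{IB}, and your argument --- the easy direction plus the observation that the maximal destabilizing subsheaf of the Harder--Narasimhan filtration is saturated, unique, and hence $G$-invariant because the $G$-action preserves $\mu_{\widetilde{L}}$-slopes (as $\widetilde{L}$ is an orbifold line bundle) --- is exactly the standard argument underlying that cited lemma. Your closing remark that uniqueness fails for the stable case correctly identifies why the equivalence is asserted only for semistability.
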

Let $p:Y\ra Y/G=:X$ be the projection. Assume that the quotient $X$ is
a smooth variety. Let $E$ be an orbifold sheaf on $Y$.  The direct
image sheaf $p_*E$ on $X$ naturally has a $G$-action compatible with
the trivial action of $G$ on the quotient $X$.  Let $(p_*E)^G$ denote
the subsheaf of $G$-invariants in $p_*E$.  \theoremstyle{definition}
\newtheorem{invariantsubsheaf}[quasiparabolic]{Definition}
\begin{invariantsubsheaf}
  Let $Av$ denote the \emph{averaging Reynolds operator}
  $Av:p_*E\ra p_*E$ given by $Av=\frac{1}{|G|}\sum_{h\in G} t_h$. Here
  $t_h$ is the endomorphism of $p_*E$ induced by the action of
  $h\in G$. The image of $Av$ is the invariant subsheaf
  $(p_*E)^G$. Thus we have a short exact sequence:
$$0\ra (p_*E)^G\ra p_*E\ra  \frac{p_*E}{(p_*E)^G}\ra 0\,.$$
\end{invariantsubsheaf}
When $E$ is an orbifold bundle, the direct image sheaf $p_*E$ is also
locally free. In this case, since the above exact sequence splits, the
subsheaf of invariants $(p_*E)^G$ is locally free as well.
%%%%%%%%%%%%%%%%%%%%%%%%%%%%%%%%%%%%%%%%%%%%%%%%%%%%%%%%%%%%%%%%%%%%%%%%%
\section{Parabolic structures associated to LM
  bundles}\label{Parabolic structure on LM bundles}
Consider a smooth projective surface $X$ over the field $\mbb{C}$. For
any smooth curve $C\xhra{i} X$, consider a complete base-point free
$g^1_d$ on $C$, say $A$ (i.e. $A\in \trm{Pic}^d(C)$ is base-point free
with $h^0(A)=2$). We then have the following short exact sequence
corresponding to the linear series $(A,H^0(C,A))$ on $C$:
\begin{equation}\label{evseqonC}
 0\lr A^{\vee}\lr \glH\otimes\mcO_C\lr A\lr 0\,.
\end{equation}
The dual LM bundle $F_{C,A}=:F$ associated to $(C,A)$ on $X$ is 
given by the short exact sequence:
 \begin{equation}\label{LMseq}
 0\lr F\lr \glH\otimes\mcO_X\lr i_*A\lr 0\,. 
 \end{equation}
 Since $F$ is a rank two vector bundle with determinant $\mcO_X(-C)$,
 we have
 $F^{\vee}\simeq F\otimes (\trm{det}\,F)^{\vee} \simeq F\otimes
 \mcO_X(C)$. Note that $F^{\vee}$ is the LM bundle associated to
 $(C,A)$ and sits in the following sequence:
 \begin{equation}\label{dualLMseq}
 0\lr \glH^{\vee}\otimes\mcO_X\lr F\otimes\mcO_X(C)\lr i_*(A^{\vee}\otimes\mcO_X(C)|_C)\lr 0\,.
\end{equation}
We now associate parabolic vector bundles to the dual LM bundle $F$ in
two ways.
\subsection{Parabolic LM bundle}\label{structure1}
We first associate a parabolic structure to the vector bundle $F$
along the divisor $C$ on $X$. Restrict \eqref{LMseq} to the curve $C$
to get the following exact sequence on $C$:
\begin{equation*}
0\lr A\,\otimes\,\mcO_X(-C)|_C\lr F|_C\lr \glH\otimes\mcO_C\lr A\lr 0\,.
\end{equation*}
From the above sequence and \eqref{evseqonC}, we also get
\begin{equation}\label{parabonF}
0\lr A\,\otimes\,\mcO_X(-C)|_C\lr F|_C\lr A^{\vee}\lr 0\,.
\end{equation}
This gives a quasi-parabolic structure on $F$ along the curve $C$, the
flag being given by:
\begin{equation*}\label{flag1}
 \boxed{\mathfrak{F}_C: F|_C\supset_{a_1} \parK\supset_{a_2} 0\,.}
\end{equation*}
Here we have associated weights $0\leq a_1<a_2<1$ to get a parabolic
vector bundle $(F,\mathfrak{F}_C,a_1,a_2)$. Let us denote $F$ along
with the above parabolic structure by $F_*$.  We call this parabolic
vector bundle the \emph{(dual) parabolic LM bundle}.
\theoremstyle{plain} \newtheorem{parabolicdual1}{Remark}[section]
\begin{parabolicdual1}\label{parabolicdual1}
  Dualizing the exact sequence \eqref{parabonF}, we get the following
  exact sequence on $C$, where $E_{C,A}=F^{\vee}$ is the LM bundle:
 $$0\lr A\lr (E_{C,A})|_C\lr (\parK)^{\vee}\lr 0\,.$$
 This gives a parabolic structure on the LM bundle $E_{C,A}$, by
 associating weights $0\leq a_1<a_2<1$ given by
$$\boxed{(E_{C,A})|_C\supset_{a_1} A\supset_{a_2} 0\,.}$$
We call the above parabolic vector bundle the \emph{parabolic LM
  bundle}. However, in this paper, we work with the \emph{dual
  parabolic LM bundle} $F_*$.
 \end{parabolicdual1}
\subsection{Parabolic Blown up LM bundle}\label{structure2}
Next, we obtain a parabolic structure on the pullback of $F$ to a
blown up surface, along the exceptional divisor. Let $X$, $C$, $A$ and
$F$ be as in the $\mathcal{x}$ \ref{structure1}. Consider a point $q$
of multiplicity one on $C$.

Consider the blow up of $X$ at $q$, say $\tildeX$. Let
$\pi:\tildeX\ra X$ denote the blow down map, and $E\subset\tildeX$,
the exceptional divisor.  Pull back the exact sequence \eqref{LMseq}
by $\pi$ to the blown-up surface $\tildeX$:
\begin{equation*}%\label{seqonblowup}
  0\ra\pi^*F=:\5tildeF\ra \glH\otimes\mcO_{\tildeX}\ra\pi^*i_*A\ra 0\,.
\end{equation*}
Restricting the above sequence to the exceptional divisor $E$, we get:
\begin{equation}\label{restricttoE}
 0\ra M\ra \5tildeF|_E\simeq F(q)\otimes\mcO_E\ra \glH\otimes\mcO_E\ra A(q)\otimes \mcO_E\ra 0\,.
\end{equation}
Here $M$ is the kernel of the restriction. On $\tildeX$, we have the
following commutative diagram, where $\tildeC$ is the strict transform
of $C$ in $\tildeX$.
\begin{displaymath}
 \xymatrix{E\,\ar@{^{(}->}[r] & \tildeC+E=\pi^*C\ar@{^{(}->}[r] \ar[d]_{\pi_C} & \tildeX\ar[d]^{\pi}\\
 	    & C \ar@{^{(}->}[r]^i & X}
\end{displaymath}
Pulling back the short exact sequence \eqref{evseqonC} on $C$ to
$\pi^*C=\tildeC+E$ and restricting to $E$, we get:
\begin{equation*}\label{curveblonE}
  0\ra A^{\vee}(q)\otimes\mcO_E\ra \glH\otimes\mcO_E\ra A(q)\otimes\mcO_E\ra 0\,.
\end{equation*}
From the above short exact sequence and from \eqref{restricttoE}, we
get:
\begin{equation*}
  0\ra M\ra F(q)\otimes\mcO_E \ra A^{\vee}(q)\otimes\mcO_E\ra 0\,.
\end{equation*}
The kernel $M$ is in fact a line bundle of the form $M=W\otimes\mcO_E$
where $W$ is a one dimensional vector space.  We thus have the
following parabolic structure on $\5tildeF$ along $E$ by associating
weights $0\leq b_1<b_2<1$:
\begin{equation*}\label{partildeF}
  \boxed{\widetilde{\mfk{F}}_E:\5tildeF|_E=F(q)\otimes\mcO_E\supset_{b_1} M=W\otimes{\mcO_E}\supset_{b_2} 0}
\end{equation*}
We denote the corresponding parabolic vector bundle by $\5tildeF_*$
and call it the \emph{(dual) blown up parabolic LM bundle}.
\section{Stability of the parabolic bundles}\label{stability of parabolic bundles}
In this section, we discuss the parabolic stability of the parabolic
vector bundles $F_*$ on $X$ and $\5tildeF_*$ on $\tildeX$.
\subsection{Parabolic stability of $F_*$}
Consider a rank one coherent subsheaf $G$ of $F$ with torsion-free
quotient $F/G$.  Then, $G$ is a line bundle, cf. \cite[Prop. 1.1 \&
1.9]{RH1}.  We give the induced parabolic structure on $G$, induced
from the parabolic vector bundle $F_*$ defined by the parabolic
structure {${ F|_C\supset_{a_1} \parK\supset_{a_2} 0}$.}  By the
procedure explained in Definition \ref{inducedparabolicintro}, the
induced structure is given as follows.
\begin{inducedstructureonG}\label{inducedstructureonG}
  The induced parabolic structure on $G$ along $C$ is given by
 $$G|_C\supset_{a_1} 0\quad\trm{or}\quad G|_C\supset_{a_2} 0\,.$$
\end{inducedstructureonG}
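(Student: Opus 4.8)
The plan is to apply the recipe for the induced parabolic structure from Definition \ref{inducedparabolicintro} directly to the rank one subsheaf $G\hookrightarrow F$, and to pin down the single nontrivial preimage term by a short case analysis. First I would record the two structural facts that make the computation clean: since $F/G$ is torsion-free, the restriction $e:G|_C\hookrightarrow F|_C$ is an injection (as cited from \cite{UB}), and $G|_C$ is a line bundle on the smooth curve $C$ because $G$ is a line bundle on the surface. Thus $G|_C$ has rank one inside the rank two bundle $F|_C$.

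Next I would pull back the two-step flag $\mathfrak{F}_C:F|_C\supset_{a_1}\parK\supset_{a_2}0$ along $e$. The top term pulls back to $e^{-1}(F|_C)=G|_C$ and the bottom to $e^{-1}(0)=0$, so the only term requiring genuine computation is the middle one, $e^{-1}(\parK)$. Using the defining sequence \eqref{parabonF} I would identify this preimage with the kernel of the composite $\varphi:G|_C\to F|_C\to A^{\vee}$, which reduces the entire question to the behaviour of one morphism of line bundles on $C$.

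The key dichotomy is exactly the vanishing or non-vanishing of $\varphi$. Because $C$ is smooth, hence integral, and both $G|_C$ and $A^{\vee}$ are line bundles, either $\varphi=0$, in which case $e^{-1}(\parK)=G|_C$, or $\varphi\neq 0$, in which case $\varphi$ is injective and $e^{-1}(\parK)=0$. Feeding each outcome into the weight assignment of Definition \ref{inducedparabolicintro}, and discarding the redundant repeated term, produces the two asserted flags. When $\varphi=0$ the surviving copy of $G|_C$ is the preimage of the $a_2$-term, so it carries weight $a_2$, giving $G|_C\supset_{a_2}0$; when $\varphi\neq 0$ it is only the preimage of the $a_1$-term, so it carries weight $a_1$, giving $G|_C\supset_{a_1}0$.

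I expect the only subtle point to be justifying that a nonzero morphism between line bundles on $C$ is injective, so that its kernel is genuinely $0$ as a subsheaf rather than merely generically zero or some intermediate torsion subsheaf; this rests on $C$ being integral together with torsion-freeness, and it is precisely what forces the preimage to land in the two extreme cases $0$ and $G|_C$ with no intermediate possibility. Once this is in place, the remainder is bookkeeping with the irredundancy convention on the pulled-back flag.
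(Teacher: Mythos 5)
Your proposal is correct and follows essentially the same route as the paper: both pull back the two-step flag along the injection $e:G|_C\hookrightarrow F|_C$ and reduce to a dichotomy for the middle term $e^{-1}(A\otimes\mcO_X(-C)|_C)$, which the paper phrases as a rank $0$ versus rank $1$ case analysis and you phrase equivalently as vanishing versus non-vanishing of the composite $G|_C\to A^{\vee}$, with the same torsion-freeness/integrality argument forcing the two extreme outcomes. The weight assignments in the two cases also match the paper's.
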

\begin{proof}
  Let $e':G\hra F$ denote the inclusion. Since $F/G$ is torsion-free,
  the restriction $e'|_C :G|_C\hra F|_C$ is also an injection.  Denote
  $e:=e'|_C$. The induced quasi-parabolic structure on $G$ is given as
  under.
\begin{displaymath}
  \xymatrix{0\  \ar@{^{(}->}[r] & \parK\ \ar@{^{(}->}[r] & F|_C\\
    0\ \ar@{^{(}->}[r] & e^{-1}(\parK)=:G'\ \ar@{^{(}->}[r]\ar@{^{(}->}[u] & e^{-1}(F|_C)=G|_C \ar@{^{(}->}[u]}
\end{displaymath}
Since $G'=e^{-1}(\parK)$ is a subsheaf of the line bundle $G|_C$, $G'$
is either of rank 0 or rank 1.
\begin{itemize}
\item If $G'$ is of rank 0, then since $G'\hra \parK$, it is
  torsion-free and hence $G'=0$. In this case, the parabolic structure
  on $G$ is given by $G|_C\supset 0$ with weight $a_1$.
 \item If $G'$ is of rank 1, then the quotient $\frac{G|_C}{G'}$ 
 is of rank 0, and is a subsheaf of the quotient 
 $\frac{F|_C}{\parK}\simeq A^{\vee}$. Hence the quotient is 0 and
 $G'\simeq G|_C$. In this case, the parabolic 
 structure on $G$ is given by $G|_C\supset 0$ with weight $a_2$.
\end{itemize}
Hence the induced parabolic structure on $G$ is given by
$\boxed{G|_C\supset_{a_i} 0}\,$ for $i=1,2$.
\end{proof}
We now prove Theorem \ref{introparstable} regarding the stability of
$F_*$ with respect to an ample line bundle $L$ on $X$.
\begin{proof}[Proof of Theorem \ref{introparstable}]
  From Definition \ref{parabolicstability}, we recall that in order to
  check parabolic stability of $F_*$, it is enough to check the
  inequality of parabolic slopes for rank one subsheaves $G$ of $F$
  with torsion-free quotient equipped with the induced structure. By
  Lemma \ref{inducedstructureonG}, the induced parabolic structure
  $G_*$ is given by $G|_C\supset_{a_i} 0$ for $i=1,2$.

  We know that, $\mu_L(F)=\frac{-C\cdot L}{2}$. Since $F$ is
  $\mu_L$-stable, $\mu_L(G)<\mu_L(F)$. Also
  $\mu_L(G)=c_1(G)\cdot L\in\mathbb{Z}$.  Thereby,
\begin{equation*}
\mu_L(G) \leq \left\{
  \begin{array}{rl}
    \mu_L(F)-1 & \text{if } (C\cdot L)\trm{ is even},\\
    \mu_L(F)-\frac{1}{2} & \text{if } (C\cdot L)\trm{ is odd}.
\end{array} \right.
\end{equation*}
The parabolic weights of $F_*$ and $G_*$ are given by (cf. Definition
\ref{parabolicweight}):
$$\mu\trm{-wt}(F)=(a_1+a_2)(C\cdot L)\trm{ and } \mu\trm{-wt}(G)=a_i(C\cdot L)\trm{ for } i=1, 2\,.$$
Next we compute the parabolic slopes of $F_*$ and $G_*$.
$$\trm{par}\mu_L(F)=\frac{c_1(F)\cdot L +\mu\trm{-wt}(F)}{2}=\mu_L(F)+\frac{(a_1+a_2)(C\cdot L)}{2}\,\trm{ and,}$$
$$\trm{par} \mu_L(G)=\mu_L(G)+a_i(C\cdot L)\trm{ where }i=1\trm{ or }2\,.$$
Since $a_1<a_2$, it is enough to prove that
$\mu_L(G)+a_2(C\cdot L)< \trm{par}\mu_L(F)$ in order to show that
$F_*$ is parabolic $\mu_L$-stable.
\begin{itemize}
\item[(a)] \underline{$C.L$ is even} - In this case, our condition on
  the $a_i$-s, i.e. $a_2-a_1<\frac{2}{(C\cdot L)}$ gives
  $-1< \frac{(a_1-a_2)(C\cdot L)}{2}$.  Thus,
 \begin{eqnarray*}
   \mu_L(G)+a_2(C\cdot L) & \leq & \mu_L(F) -1+ a_2(C\cdot L)\\
                          & < & \mu_L(F)+\frac{(a_1+a_2)(C\cdot L)}{2}=\trm{par}\mu_L(F)\,.
\end{eqnarray*}
\item[(b)] \underline{$C.L$ is odd} - In this case, the condition on
  $a_i$-s is $a_2-a_1<\frac{1}{(C\cdot L)}$. This gives
  $-\frac{1}{2}<\frac{(a_1-a_2)(C\cdot L)}{2}$.  Thus,
 \begin{eqnarray*}
\mu_L(G)+a_2(C\cdot L) & \leq & \mu_L(F) -\frac{1}{2}+ a_2(C\cdot L)\\
& < & \mu_L(F)+\frac{(a_1+a_2)(C\cdot L)}{2}=\trm{par}\mu_L(F)\,. 
\end{eqnarray*}
 \end{itemize}
 By parts (a) and (b), $F_*$ is parabolic $\mu_L$-stable.
\end{proof}
\theoremstyle{plain}
\newtheorem{parabolicK3stable}[inducedstructureonG]{Corollary}
\newtheorem{projectivespparabolic}[inducedstructureonG]{Corollary}
\begin{parabolicK3stable}\label{parabolicK3stable}
  Let $X$ be a smooth, projective K3 surface and $L\in \emph{Pic}(X)$
  an ample line bundle such that a general curve $C\in | L |$ has
  genus $g$, Clifford dimension $1$ and maximal gonality $k $.  Let
  the Brill-Noether number $\rho ( g, 1, d ) > 0$. For a general
  $C\in |L|$ , consider the rank $2$ dual LM bundle $F:=F_{C,A}$
  associated with a general complete, base-point free $g_d^1$, say $A$
  on $C$. Then the dual Parabolic LM bundle $(F,\mfk{F}_C,a_1,a_2)$ is
  parabolic $\mu_L$-stable, whenever
  $$a_2-a_1<\frac{1}{g-1}.$$
\end{parabolicK3stable}
\begin{proof}
  Lelli-Chiesa \cite[Theorem 4.3]{ML} proves that such an $F=F_{C,A}$
  is $\mu_L$-stable. The proof follows from Theorem
  \ref{introparstable} by observing that on a K3 surface
  $C\cdot L=(C^2)=2g-2$, which is even.
\end{proof}
\begin{projectivespparabolic}\label{projectivespparabolic}
  Let $X=\mbb{P}^2_{\mathbb{C}}$, the projective plane. Consider
  $\mcO(d)$ for an \emph{odd} $d>0$. Let $C\in |\mcO(d)|$ be a general
  smooth curve, and $A=\mcO(rd)|_C$ (for $r>0$) on $C$. Let
  $V\subset H^0(C,A)$ be a general two dimensional subspace of global
  sections. Consider the dual LM bundle $F:=F_{C,A,V}$ associated to
  the triple $(C,A,V)$, and the associated parabolic structure on $F$,
  say $(F,\mfk{F}_C,a_1,a_2)$ where $a_2-a_1<\frac{1}{d}$. Then
  $(F,\mfk{F}_C,a_1,a_2)$ is parabolic $\mu_{\mcO(1)}$-stable.
\end{projectivespparabolic}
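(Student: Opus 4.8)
The plan is to deduce this corollary from Theorem~\ref{introparstable}, exactly as Corollary~\ref{parabolicK3stable} was deduced, the only genuine work being the verification that $F=F_{C,A,V}$ is $\mu_L$-stable for $L=\mcO(1)$. First I would record the numerical input: on $\mathbb{P}^2$ one has $C\cdot L = d\,(H\cdot H)=d$, which is odd by hypothesis, and the assumption $a_2-a_1<\frac{1}{d}=\frac{1}{(C\cdot L)}$ is precisely condition (b) of Theorem~\ref{introparstable}. Hence once $\mu_L$-stability of $F$ is established, parabolic $\mu_{\mcO(1)}$-stability follows immediately.

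To prove $F$ is $\mu_L$-stable, note $\mu_L(F)=\frac{c_1(F)\cdot L}{2}=-\frac{d}{2}$, and that every rank one subsheaf of $F$ with torsion-free quotient is a line bundle $\mcO(k)$ with $\mu_L(\mcO(k))=k\in\mathbb{Z}$. Since $F\hra V\otimes\mcO_X\cong\mcO_X^{\oplus 2}$ with $V$ two-dimensional, any nonzero $\mcO(k)\to F$ composes to a nonzero map $\mcO(k)\to\mcO_X^{\oplus 2}$, forcing $k\le 0$. As $d$ is odd, the destabilizing condition $k\ge -\frac{d}{2}$ is equivalent to $-\frac{d-1}{2}\le k\le 0$. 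Writing $j=-k$, it therefore suffices to show $H^0(X,F(j))=0$ for all $0\le j\le\frac{d-1}{2}$.

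For this I would twist the defining sequence \eqref{LMseq} (with $\glH$ replaced by $V$) by $\mcO(j)$ and take cohomology, identifying $H^0(X,F(j))$ with the kernel of the multiplication map
$$m_j:\ V\otimes H^0(\mathbb{P}^2,\mcO(j))\lr H^0\big(C,\,A\otimes\mcO(j)|_C\big).$$
The case $j=0$ is immediate since $V\hra \glH$ is an inclusion. For $1\le j\le\frac{d-1}{2}$ I would factor $m_j$ through the restriction $H^0(\mathbb{P}^2,\mcO(j))\hra H^0(C,\mcO(j)|_C)$, which is injective because its kernel $H^0(\mathbb{P}^2,\mcO(j-d))$ vanishes for $j<d$. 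On $C$, the base-point-free pencil trick applied to the pencil $V\subset\glH$ (a general two-dimensional $V$ is base-point free since $A$ is very ample) yields the Koszul sequence $0\to A^{\vee}\to V\otimes\mcO_C\to A\to 0$; tensoring by $\mcO(j)|_C$ and taking sections identifies the kernel of the cup-product $V\otimes H^0(C,\mcO(j)|_C)\to H^0(C,A\otimes\mcO(j)|_C)$ with $H^0(C,A^{\vee}\otimes\mcO(j)|_C)=H^0(C,\mcO(j-rd)|_C)$. Since $r\ge 1$ forces $j\le\frac{d-1}{2}<rd$, this group vanishes, so the cup-product is injective and hence so is $m_j$.

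The main obstacle is the injectivity of $m_j$: naively comparing dimensions is not enough, and the clean route is the factorization through restriction to $C$ together with the base-point-free pencil trick, which reduces everything to the negative-degree vanishing $H^0(C,\mcO(j-rd)|_C)=0$. Having shown $H^0(X,F(j))=0$ throughout the destabilizing range $0\le j\le\frac{d-1}{2}$, the bundle $F$ admits no destabilizing line subbundle and is therefore $\mu_L$-stable; Theorem~\ref{introparstable}(b) then gives parabolic $\mu_{\mcO(1)}$-stability of $(F,\mfk{F}_C,a_1,a_2)$.
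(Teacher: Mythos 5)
Your proposal is correct, and the final reduction is exactly the paper's: on $\mbb{P}^2$ one has $C\cdot L=d$ odd, so the hypothesis $a_2-a_1<\tfrac{1}{d}$ is case (b) of Theorem~\ref{introparstable}, and everything hinges on the $\mu_{\mcO(1)}$-stability of $F=F_{C,A,V}$. The difference is in how that stability is obtained. The paper disposes of it in one line by citing \cite[Theorem 1.2, Remark 4.10]{NP2}; you instead prove it from scratch, and your argument is sound: a saturated rank-one subsheaf of $F$ is some $\mcO(k)$ with $k\le 0$ (from $F\hra V\otimes\mcO_X$), the destabilizing range for odd $d$ is $-\tfrac{d-1}{2}\le k\le 0$, and the vanishing $H^0(F(j))=0$ for $0\le j\le\tfrac{d-1}{2}$ follows from the injectivity of $m_j$, which you correctly factor as (injective restriction $H^0(\mcO(j))\hra H^0(C,\mcO(j)|_C)$, using $j<d$) followed by (injective cup product, via the base-point-free pencil trick and $H^0(C,\mcO(j-rd)|_C)=0$ for $j<rd$). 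What your route buys is self-containedness and slightly more: the argument uses only that $C$ is smooth and that the pencil $(A,V)$ is base-point free, not the generality of $C$ or of $V$ (which the cited reference presumably exploits or at least assumes). What it costs is length relative to the paper's citation; also, your parenthetical that a general $V$ is base-point free ``since $A$ is very ample'' is really part of the standing hypotheses needed to define $F_{C,A,V}$ at all, so it is not an extra burden on your proof.
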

\begin{proof}
  From \cite[Theorem 1.2, Remark 4.10]{NP2}, we observe that the dual
  LM bundle $F$ is $\mu_{\mcO(1)}$-stable. Since $L=\mcO(1)$, for
  $C\in |\mcO(d)|$, we get $C\cdot L=d$ (which is odd in our
  case). The result thus follows.
\end{proof}
\subsection{Parabolic stability of $\5tildeF_*$}\label{stability-2}
The line bundle $\pi^*L$ is not ample on $\tildeX$ because it is
trivial when restricted to the exceptional divisor.  Consider the line
bundle $\Ln$ on $\tildeX$ which is ample (by \cite[Lemma 3]{Br}) for
$n$ sufficiently large,
\begin{equation*}\label{ampleblup}
  \Ln =n \pi^*L \otimes \mcO_{\tildeX}(-E)\,.
\end{equation*}
We study the parabolic stability of $\5tildeF_*$ with respect to the
ample line bundle $\Ln$. We first note the following about the
$\mu_{\Ln}$-stability of $\5tildeF$ in the following Remark.
\begin{pbstable}\label{pbstable}
  If $F$ is a $\mu_L$-stable vector bundle on $X$, then there exists
  an $n_0\in\mathbb{Z}$ such that for all $n\geq n_0$, the vector
  bundle $\5tildeF=\pi^*F$ is a $\mu_{\Ln}$-stable vector bundle on
  $\tildeX$. Refer \cite[Prop. 2.2 (1)]{Nak} for a proof.
\end{pbstable}
Consider a rank 1 coherent subsheaf $\tildeG$ of $\5tildeF$ with
torsion-free quotient. By similar arguments as before, $\tildeG$ is a
line bundle. We give the induced parabolic structure on $\tildeG$
induced from $\5tildeF_*$ which is defined by
${\5tildeF|_E\supset_{b_1} M=W\otimes\mcO_E\supset_{b_2} 0\,.}$
\newtheorem{inducedstructureonblowupG}[inducedstructureonG]{Lemma}
\begin{inducedstructureonblowupG}
  The induced parabolic structure on $\tildeG$ along $E$ is given
  either by
 $$\tildeG|_E\supset_{b_1} 0\quad\trm{or}\quad \tildeG|_E\supset_{b_2} 0\,.$$
\end{inducedstructureonblowupG}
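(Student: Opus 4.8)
The plan is to run the argument of Lemma \ref{inducedstructureonG} line for line, with the exceptional divisor $E$ in place of $C$ and the flag $\widetilde{\mfk{F}}_E$ in place of $\mfk{F}_C$. First I would write $e':\tildeG\hra\5tildeF$ for the inclusion. Since the quotient $\5tildeF/\tildeG$ is torsion-free, its restriction $e:=e'|_E:\tildeG|_E\hra\5tildeF|_E$ stays injective (cf. \cite[$\mathcal{x}\,$1.8]{UB}), so Definition \ref{inducedparabolicintro} applies: the induced quasi-parabolic flag on $\tildeG$ is the pullback of the one-step filtration $M\subset\5tildeF|_E=F(q)\otimes\mcO_E$, and I set $\tildeG':=e^{-1}(M)$.

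Next I would observe that $\tildeG'$ is a subsheaf of the line bundle $\tildeG|_E$ and hence has rank $0$ or rank $1$, and split into these two cases. If $\tildeG'$ has rank $0$, then as a subsheaf of $M=W\otimes\mcO_E$ it is torsion-free, forcing $\tildeG'=0$; the weight-assignment rule of Definition \ref{inducedparabolicintro} then yields the structure $\tildeG|_E\supset_{b_1}0$. If $\tildeG'$ has rank $1$, then $\tildeG|_E/\tildeG'$ has rank $0$ and injects into the quotient $\5tildeF|_E/M\simeq A^{\vee}(q)\otimes\mcO_E$, which is a line bundle on $E\simeq\mbb{P}^1$ and in particular torsion-free; therefore $\tildeG|_E/\tildeG'=0$, so $\tildeG'\simeq\tildeG|_E$ and the induced structure is $\tildeG|_E\supset_{b_2}0$. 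Taking the two cases together gives exactly the claimed statement.

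I do not anticipate a genuine obstacle, since the argument is formally identical to Lemma \ref{inducedstructureonG}, with $A^{\vee}(q)\otimes\mcO_E$ playing the role of $A^{\vee}$. The only points needing a check (immediate in each case) are that the subsheaf $M$ is torsion-free and that the cokernel $\5tildeF|_E/M$ is torsion-free; both are supplied by the short exact sequence $0\ra M\ra F(q)\otimes\mcO_E\ra A^{\vee}(q)\otimes\mcO_E\ra 0$ constructed in $\mathcal{x}\,$\ref{structure2}, which exhibits $M$ as a line bundle and the cokernel as the line bundle $A^{\vee}(q)\otimes\mcO_E$. If anything is delicate it is the injectivity of $e=e'|_E$ upon restriction to $E$, which is why I would invoke the torsion-freeness of $\5tildeF/\tildeG$ together with the cited property of \cite{UB} rather than treat it as automatic.
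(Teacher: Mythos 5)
Your proposal is correct and is exactly the paper's argument: the paper's proof consists of the single remark that the argument is the same as that of Lemma \ref{inducedstructureonG}, and your write-up is precisely that argument transported to $E$, with $M=W\otimes\mcO_E$ and $A^{\vee}(q)\otimes\mcO_E$ playing the roles of $\parK$ and $A^{\vee}$. Nothing is missing.
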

\begin{proof}
  The proof is exactly on the same lines as the proof of Lemma
  \ref{inducedstructureonG}.
\end{proof}

We now prove Theorem \ref{introparstable2} about the
$\mu_{\Ln}$-stability of $\5tildeF_*$ for $n\geq n_0$, where $n_0$ is
as in Remark \ref{pbstable}).
\begin{proof}[Proof of Theorem \ref{introparstable2}]
  Since the dual LM bundle $F$ is $\mu_L$-stable, by Remark
  \ref{pbstable}, there is an integer $n_0$ such that for all
  $n\geq n_0$, $\5tildeF$ is $\mu_{\Ln}$-stable. Note that,
$$\mu_{\Ln}(\5tildeF)=\frac{c_1(\5tildeF)\cdot \Ln}{2}\simeq \frac{\pi^*\mcO_X(-C)\cdot (n\pi^*L-E)}{2}=\frac{-n(C\cdot L)}{2}\,.$$
If $\tildeG$ is any rank 1 subsheaf of $\5tildeF$ with torsion-free
quotient, then $\mu_{\Ln}(\tildeG)<\mu_{\Ln}(\5tildeF)\,.$ Since
$\mu_{\Ln}(\tildeG)=c_1(\tildeG)\cdot \Ln\in\mathbb{Z}$, we get:
\begin{equation*}\label{parmutildeG}
\mu_{\Ln}(\tildeG) \leq \left\{
\begin{array}{rl}
  \mu_{\Ln}(\5tildeF)-\frac{1}{2} & \text{if } (C\cdot L) \trm{ and } n \trm{ are odd },\\
  \mu_{\Ln}(\5tildeF)-1 & \text{otherwise}.
\end{array} \right.
\end{equation*}
As discussed in the previous theorem, in order to check for parabolic
stability it is enough to check the slope inequality for rank 1
subsheaves $\tildeG\subset\5tildeF$ with torsion-free quotient
equipped with the induced parabolic structure. The induced parabolic
structure on such a $\tildeG$ is
$\tildeG|_E\supset_{b_i} 0\trm{ for }i=1,2\,.$ So the parabolic
weights of $\5tildeF$ and $\tildeG$ are:
\[\mu\trm{-wt}(\5tildeF)= (b_1+b_2)E\cdot(n\pi^*L-E)=-(b_1+b_2)E^2\,.\]
Since $(E^2)=-1$, we get $\mu\trm{-wt}(\5tildeF)=b_1+b_2.$ Similarly,
we get that $\mu\trm{-wt}(\tildeG)= b_iE\cdot\Ln= b_i$.  We now
compute the parabolic slopes,
$$\trm{par}\mu_{\Ln}(\tildeG)=\mu_{\Ln}(\tildeG)+\mu\trm{-wt}(\tildeG)=\mu_{\Ln}(\tildeG)+b_i\,.$$
$$\trm{par}\mu_{\Ln}(\5tildeF)=\mu_{\Ln}(\5tildeF)+\frac{\mu\trm{-wt}(\5tildeF)}{2}=\mu_{\Ln}(\5tildeF)+\frac{b_1+b_2}{2}\,.$$

Since $b_1<b_2$, it is enough to check that
$\mu_{\Ln}(\tildeG)+b_2< \trm{par}\mu_{\Ln}(\5tildeF)$. Since
$0\leq b_1<b_2<1$, we have $b_2-b_1<1$. Again we have two cases.
\begin{itemize}
\item[(a)] \underline{Both $n$ and $(C\cdot L)$ are odd -} Note that,
  $-\frac{1}{2}<\frac{b_1-b_2}{2}$. We now have
 \begin{eqnarray*}
  \mu_{\Ln}(\tildeG)+b_2 & \leq & \mu_{\Ln}(\5tildeF)-\frac{1}{2} +b_2 \\
   & < & \mu_{\Ln}(\5tildeF)+\frac{b_1+b_2}{2} = \trm{par}\mu_{\Ln}(\5tildeF)\,.
 \end{eqnarray*}
\item[(b)] \underline{At least one of $n$ or $(C\cdot L)$ is even -}
  Note that $\frac{b_2-b_1}{2}<b_2-b_1<1$, which gives
  $-1<\frac{b_1-b_2}{2}$.  We have
 \begin{eqnarray*}
   \mu_{\Ln}(\tildeG)+b_2 & \leq & \mu_{\Ln}(\5tildeF)-1 +b_2 \\
                          & < & \mu_{\Ln}(\5tildeF)+\frac{b_1+b_2}{2} = \trm{par}\mu_{\Ln}(\5tildeF)\,.
 \end{eqnarray*}
\end{itemize}
Thus from parts (a) and (b), we have that the (dual) parabolic blown
up LM bundle $\5tildeF_*$ is parabolic $\mu_{\Ln}$-stable for all
weights $b_1$ and $b_2$ and for $n\geq n_0$, where $n_0$ is as in
Remark \ref{pbstable}.
\end{proof}
\newtheorem{blownparabolicK3stable}[inducedstructureonG]{Corollary}
\newtheorem{blownprojectivespparabolic}[inducedstructureonG]{Corollary}
\begin{blownparabolicK3stable}
  Let $X$ be a smooth, projective K3 surface and $L\in \emph{Pic}(X)$
  an ample line bundle such that a general curve $C\in | L |$ has
  genus $g$, Clifford dimension $1$ and maximal gonality $k $.  Let
  the Brill Noether number $\rho ( g, 1, d ) > 0$. For a general
  $C\in |L|$ , consider the rank $2$ dual LM bundle $F:=F_{C,A}$
  associated with a general complete, base-point free $g_d^1$ say $A$
  on $C$. Then, there is an integer $n_0$ such that the dual parabolic
  blown up LM bundle $(\5tildeF,\widetilde{\mfk{F}}_E,b_1,b_2)$ is
  parabolic $\mu_{\Ln}$-stable, for all choices of weights and for all
  $n\geq n_0.$
\end{blownparabolicK3stable}
\begin{proof}
  This result again follows from Lelli-Chiesa \cite[Theorem 4.3]{ML},
  whereby such an $F_{C,A}$ is $\mu_L$-stable.
\end{proof}
\begin{blownprojectivespparabolic}
  Let $X=\mbb{P}^2_{\mathbb{C}}$, the projective plane. Consider
  $\mcO(d)$ for an odd number $d>0$. Let $C\in |\mcO(d)|$ be a general
  smooth curve, and $A=\mcO_X(rd)|_C$ for $r>0$ on $C$. Let
  $V\subset H^0(C,A)$ be a general two dimensional subspace of global
  sections. Consider the dual LM bundle $F:=F_{C,A,V}$ associated to
  the triple $(C,A,V)$, and the associated parabolic structure on
  $\5tildeF$, say $(\5tildeF,\widetilde{\mfk{F}}_E,b_1,b_2)$.  Then
  there is an integer $n_0$ such that
  $(\5tildeF,\widetilde{\mfk{F}}_E,b_1,b_2)$ is parabolic
  $\mu_{\Ln}$-stable, where $L=\mcO(1)$ and for all $n\geq n_0$.
\end{blownprojectivespparabolic}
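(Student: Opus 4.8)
The plan is to reduce this to Theorem \ref{introparstable2} exactly as the K3 corollary above does, the only new ingredient being the ordinary $\mu_{\mcO(1)}$-stability of the dual LM bundle $F_{C,A,V}$ on $\mbb{P}^2$. In other words, the blow-up corollary should stand to the projective-plane stability result \cite[Theorem 1.2, Remark 4.10]{NP2} in precisely the way Corollary \ref{projectivespparabolic} stands to Theorem \ref{introparstable}.

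First I would invoke \cite[Theorem 1.2, Remark 4.10]{NP2}, which was already used in the proof of Corollary \ref{projectivespparabolic}: for $X=\mbb{P}^2$, a general smooth curve $C\in|\mcO(d)|$ with $d$ odd, $A=\mcO(rd)|_C$, and $V\subset H^0(C,A)$ a general two-dimensional subspace, the dual LM bundle $F=F_{C,A,V}$ is $\mu_{\mcO(1)}$-stable. Setting $L=\mcO(1)$, this is exactly the hypothesis of Theorem \ref{introparstable2}.

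Then I would apply Theorem \ref{introparstable2} verbatim: since $F$ is $\mu_L$-stable, there exists an integer $n_0$ so that for all $n\geq n_0$ the blown up parabolic LM bundle $\5tildeF_*=(\5tildeF,\widetilde{\mfk{F}}_E,b_1,b_2)$ is parabolic $\mu_{\Ln}$-stable, and --- crucially --- this holds for all choices of weights $b_1,b_2$, with no constraint of the form appearing in Corollary \ref{projectivespparabolic}.

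The step doing the real work is Theorem \ref{introparstable2}, and the reason no weight condition is needed here (unlike the $a_2-a_1<\tfrac1d$ requirement for the unblown parabolic bundle) is structural: the parabolic filtration lives along the exceptional divisor $E$ with $(E^2)=-1$, so the weight contributions to the parabolic slopes are bounded by $1$ independently of $d$, while $\mu_{\Ln}$-stability of $\5tildeF$ already forces an integer slope gap of at least $\tfrac12$. Consequently there is no genuine obstacle beyond confirming that the cited $\mu_{\mcO(1)}$-stability result applies to the general triple $(C,A,V)$ rather than to the full $H^0(C,A)$, which it does by the same reasoning as in Corollary \ref{projectivespparabolic}.
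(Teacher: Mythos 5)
Your proposal matches the paper's proof exactly: both reduce the statement to Theorem \ref{introparstable2} by invoking \cite[Theorem 1.2]{NP2} for the $\mu_{\mcO(1)}$-stability of $F_{C,A,V}$, with no weight condition needed since Theorem \ref{introparstable2} holds for all $b_1,b_2$. Your added explanation of why the exceptional-divisor filtration imposes no constraint on the weights is a correct gloss on the proof of that theorem, not a departure from the paper's route.
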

\begin{proof}
  The result follows from \cite[Theorem 1.2]{NP2} which proves that
  the dual LM bundle $F_{C,A,V}$ is $\mu_{\mcO(1)}$-stable.
\end{proof}
%%%%%%%%%%%%%%%%%%%%%%%%%%%%%%%%%%%%%%%%%%%%%%%%%%%%%%%%%%%%%%%%%%%%%%%%%%%%%%%%%%%%%%%%%%%%%%%%%%%
\section{Orbifold LM bundle}\label{Orbifold LM bundle}
Recall that the quasi-parabolic structure on the parabolic vector
bundle $F_*$ is given by
$$F|_C=F_1(F|_C)\supset \parK = F_2(F|_C)\supset 0=F_3(F|_C)\,.$$ 
By \cite[Definition 1.1]{MY}, the quasi-parabolic structure can be
alternatively described by a filtration of the vector bundle $F$
itself of the form
$$F=F_1(F)\supset F_2(F)\supset F_3(F)=F(-C)\,.$$
The elements of the two filtrations are related as follows
(cf. \cite[$\mathcal{x}\,$3]{IB}):
\begin{equation*}
  F_i(F|_C)=\frac{F_i(F)}{F(-C)}\,.
\end{equation*}
We obtain the equivalent formulation of the quasi-parabolic structure
on $F_*$ from the following lemma.
\newtheorem{parabolicstructurealt}{Lemma}[section]
\begin{parabolicstructurealt}\label{parabolicstructurealt}
  The quasi-parabolic structure on the (dual) parabolic LM bundle
  $F_*$ is equivalent to the quasi-parabolic structure given by the
  following filtration of $F$:
  \begin{equation}
    \boxed{\mfk{F}:F\supseteq \glH\otimes\mcO_X(-C)\supseteq F(-C)\,.}\nonumber
  \end{equation}
\end{parabolicstructurealt}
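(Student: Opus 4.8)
The plan is to exhibit a filtration of the vector bundle $F$ whose restriction to $C$ recovers the given quasi-parabolic filtration $F|_C\supset \parK\supset 0$, using the relation $F_i(F|_C)=F_i(F)/F(-C)$ stated just before the lemma. So the real content is to identify the middle term: I claim that the subbundle $\glH\otimes\mcO_X(-C)\hookrightarrow F$ (twisting the evaluation map by $\mcO_X(-C)$) is exactly the term $F_2(F)$ whose image in $F|_C$ is $\parK = A\otimes\mcO_X(-C)|_C$.

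First I would construct the inclusion $\glH\otimes\mcO_X(-C)\hookrightarrow F$. Recall from \eqref{LMseq} that $F=\ker(\glH\otimes\mcO_X\to i_*A)$. Tensoring \eqref{LMseq} by $\mcO_X(-C)$ and comparing, or more directly restricting the evaluation map $\glH\otimes\mcO_X\to i_*A$ after twisting, produces a natural map from $\glH\otimes\mcO_X(-C)$ into $F$; I would verify it is injective with torsion-free quotient. Then $F\supseteq \glH\otimes\mcO_X(-C)\supseteq F(-C)$, the last inclusion holding because $F(-C)=F\otimes\mcO_X(-C)\hookrightarrow \glH\otimes\mcO_X(-C)$ comes from \eqref{LMseq} twisted by $\mcO_X(-C)$.

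Next I would compute the restriction to $C$, i.e.\ apply the quotient $F_i(F)/F(-C)$. On one hand, $F_1(F)/F(-C)=F/F(-C)=F|_C$ (using that $\mcO_X(-C)|_C$ twist of $F$ restricted is $F|_C$ by the standard conormal identification). On the other hand, the middle term $F_2(F)/F(-C) = (\glH\otimes\mcO_X(-C))/F(-C)$; by the twisted sequence \eqref{LMseq} this quotient is precisely $i_*A\otimes\mcO_X(-C)$, which restricts to $\parK = A\otimes\mcO_X(-C)|_C$. Finally the bottom step $F_3(F)/F(-C)=F(-C)/F(-C)=0$ recovers the zero term of the flag. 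Thus the filtration $\mfk{F}:F\supseteq \glH\otimes\mcO_X(-C)\supseteq F(-C)$ restricts termwise to $\mfk{F}_C$, establishing the claimed equivalence.

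The main obstacle I anticipate is the bookkeeping of the twists: making precise that the middle quotient $(\glH\otimes\mcO_X(-C))/F(-C)$ is supported on $C$ and equals $A\otimes\mcO_X(-C)|_C$ rather than something off by a twist, and checking that the resulting subsheaf of $F|_C$ agrees with $\parK$ as it sits inside $F|_C$ (not merely abstractly isomorphic). This amounts to carefully chasing the twisted version of \eqref{LMseq} against the unrestricted sequence and invoking the Maruyama--Yokogawa dictionary $F_i(F|_C)=F_i(F)/F(-C)$; once the identifications are pinned down the verification is routine.
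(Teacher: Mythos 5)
Your proposal is correct and follows essentially the same route as the paper: the paper likewise reduces to identifying $F_2(F)$ via the relation $F_2(F|_C)=F_2(F)/F(-C)$ and then tensors the defining sequence \eqref{LMseq} by $\mcO_X(-C)$ to exhibit $\glH\otimes\mcO_X(-C)$ as the unique middle term fitting into $0\to F(-C)\to F_2(F)\to i_*(\parK)\to 0$. Your extra care in constructing the inclusion $\glH\otimes\mcO_X(-C)\hookrightarrow F$ and checking the quotient as a subsheaf of $F|_C$ only makes explicit what the paper leaves implicit.
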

\begin{proof}
  By the discussion above, we just need to determine $F_2(F)$. We also
  have, $\displaystyle F_2(F|_C)=F_2(F)/F(-C)\,.$ Hence $F_2(F)$ has
  to sit in the following short exact sequence:
$$ 0\ra F(-C)\ra F_2(F)\lr i_*(\parK)\ra 0\,.$$
Tensoring the exact sequence \eqref{LMseq} which defines $F$ by
$\mcO_X(-C)$, we get
$$0\lr F(-C)\lr \glH\otimes\mcO_X(-C)\lr i_*(\parK)\lr 0\,.$$
Thus, $F_2(F)=\glH\otimes\mcO_X(-C)$.
\end{proof}
So, the parabolic vector bundle $(F,\mfk{F}_C,a_1,a_2)$ from
$\mathcal{x}\,$\ref{structure1} can alternatively be described as
$(F,\mfk{F},a_1,a_2)$ where the flag $\mfk{F}$ is described in Lemma
\ref{parabolicstructurealt}.
\newtheorem{anotherwayparabolic}[parabolicstructurealt]{Remark}
\begin{anotherwayparabolic}\label{anotherwayparabolic}
  Set $a_0=a_2-1$ and $a_3=1$, where $F_*=(F,\mfk{F},a_1,a_2)$ is the
  (dual) parabolic LM bundle.  Let $[t]$ denote the integral part of
  any $t\in\mathbb{R}$.  For each $t\in\mbb{R}$, consider the
  following locally free sheaves $E_t=F_i(F)(-[t]C)$, where
  $a_{i-1}<t-[t]\leq a_i$. Note that the locally free sheaves
  $\{E_t\}$ give a filtration which is decreasing, i.e. if $t\geq t'$,
  then $E_t\subseteq E_{t'}$. This filtration is left continuous.
  Further, this filtration has a jump at some $t$, i.e. for all
  $\epsilon>0$, $E_{t+\epsilon}\neq E_t$ if and only if $t-[t]=a_i$
  where $i=1,2$. Finally, for all $t\in\mbb{R}$, we have
  $E_{t+1}=E_t(-C)$.  This filtration $\{E_t\}_{t\in\mbb{R}}$
  completely determines the parabolic vector bundle
  $(F,\mfk{F},a_1,a_2)$, cf. \cite{MY} and \cite{IB}.  In particular,
  the parabolic vector bundle $F_*$ is completely determined by the
  filtration of vector bundles
\begin{eqnarray*}
 E_t & = & F \trm{ for }t\in[0,a_1]\\
  & = & \glH\otimes\mcO_X(-C) \trm{ for } t\in (a_1,a_2]\\
  & = & F(-C) \trm{ for } t\in(a_2,1].
\end{eqnarray*}
  \end{anotherwayparabolic}
  We recall the Kawamata covering Theorem
  \cite[Prop. 4.1.12]{Laz}. Given a positive integer $N>0$, there is a
  non-singular connected projective variety $Y$ and a finite Galois
  morphism $p:Y\ra X$ with Galois group
  $\Gamma = \trm{Gal}(\trm{Rat}(Y)/\trm{Rat}(X))$ such that $p^*C$ is
  a non-reduced divisor of the form $NC'$ with
  $(p^*C)_{\text{red}}=C'$. Here $C'$ is a smooth curve on $Y$. We
  then have the following commutative diagram, for any $m$ such that
  $1\leq m < N$.
\begin{displaymath}
 \xymatrix{C' \ar@{^{(}->}[r]^{\hspace{-0.3 cm} g}\ar[drr]_{\psi} & mC' \ar@{^{(}->}[r]^{\ f} \ar[dr]^{\phi}  & NC' \ar@{^{(}->}[r]^{j}\ar[d]^{p} & Y \ar[d]^p\\ 
 & & C\ar@{^{(}->}[r]_{i} & X}
\end{displaymath}
Pulling back the exact sequence \eqref{evseqonC} to $mC'$ by $\phi$,
we get:
$$0\ra\phi^*A^{\vee}\ra H^0(C,A)\otimes\mcO_{mC'}\ra
  \phi^*A\ra 0\,.$$ Denote $A'=\phi^*A.$ Then $(A',H^0(C,A))$ is a
base-point free linear system on the curve $mC'$ 
(which is non-reduced when $m>1$). We can thus consider the dual
LM bundle $F'$ on $Y$ associated to the triple $(mC',A',H^0(C,A))$ 
given by the following short exact sequence: 
\begin{equation}\label{LMsequenceKawamata}
 0\ra F'\ra H^0(C,A)\otimes\mcO_Y\ra j_*f_*A'\ra 0\,.
\end{equation}
\newtheorem{LMonKawamataorbifold}[parabolicstructurealt]{Remark}
\begin{LMonKawamataorbifold}
  The dual LM bundle $F'$ on $Y$ associated to the triple
  $(mC',A',H^0(C,A))$ has a natural orbifold structure. Indeed,
  $H^0(C,A)\,\otimes\,\mcO_Y\ra j_*f_*A'$ is a morphism between
  orbifold sheaves on $Y$ compatible with the action of $\Gamma$ on
  both the sheaves. We call this orbifold bundle $F'$ on $Y$ ``the
  orbifold LM bundle''.
\end{LMonKawamataorbifold}
By the construction of Biswas \cite[$\mathcal{x}\,$2c]{IB}, this
orbifold LM bundle corresponds to a uniquely determined parabolic
bundle on $X$. We now determine this parabolic vector bundle.  For any
$t\in \mbb{R}$, define, as in \cite{IB},
\begin{equation}\label{paraboliccontfiltration}
E_t=\Big(p_*\Big(F'\otimes \mcO_Y([-t\cdot N]C')\Big)\Big)^{\Gamma}\,. 
\end{equation}
Just as in Remark \ref{anotherwayparabolic}, this defines a parabolic
sheaf on $ X$, cf. \cite[$\mathcal{x}\,$2]{IB} and
\cite[$\mathcal{x}\,$1]{MY}.  We first have the following preliminary
lemma.  \newtheorem{conormbundledescent}[parabolicstructurealt]{Lemma}
\begin{conormbundledescent}\label{conormalbyndle}
  Let $A$ be a line bundle on $C$. For any $m$ such that $1\leq m<N$,
  consider the line bundle $L':=\psi^*A\otimes\mcO_Y(-mC')|_{C'}$ on
  $C'$. Then, with notations as in the commutative diagram below, the
  invariant sheaf
  $(i_*\psi_*L')^{\Gamma}=(p_*j_*f'_*h'_*g_*L')^{\Gamma}=0$.
\end{conormbundledescent}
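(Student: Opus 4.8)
The plan is to reduce the statement to a vanishing of $\Gamma$-invariant sections on the curve $C'$, and then to detect this vanishing through the action of the inertia group of the ramification divisor $C'$. Since $i:C\hra X$ is a closed immersion on which $\Gamma$ acts trivially, one has $(i_*\psi_*L')^{\Gamma}=i_*\!\left((\psi_*L')^{\Gamma}\right)$, while the two pushforwards displayed in the statement agree by the commutativity of the diagram, both computing the pushforward of $L'$ from $C'$ to $X$. Hence it suffices to prove that $(\psi_*L')^{\Gamma}=0$ as a sheaf on $C$; equivalently, that $L'$ admits no nonzero $\Gamma$-invariant local sections.

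First I would pin down the $\Gamma$-equivariant (orbifold) structure of $L'$. Around a general point of $C$ the Kawamata cover is modelled by $w\mapsto w^{N}=z$, where $z$ cuts out $C$ and $w$ cuts out $C'$; the inertia group $I\subset\Gamma$ of $C'$ is then cyclic of order $N$ (the ramification index, since $p^{*}C=NC'$), acting on the normal coordinate by $w\mapsto\zeta w$ for a primitive $N$-th root of unity $\zeta$. Writing $L'=\psi^{*}A\otimes\left(\mcO_Y(-mC')|_{C'}\right)$, I would observe that $\psi^{*}A$ carries the tautological pullback linearization, so $I$ acts trivially on its fibres; meanwhile $\mcO_Y(-mC')|_{C'}$ is locally generated by $w^{m}$ modulo $w^{m+1}$, on which the generator of $I$ acts by the character $\zeta\mapsto\zeta^{m}$. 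Thus $I$ acts on the fibres of $L'$ over a general point of $C'$ through the character $\chi^{m}$, where $\chi$ denotes the tautological character of $I\cong\mbb{Z}/N$.

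The decisive point is that $\chi^{m}$ is nontrivial: it is trivial exactly when $N\mid m$, which is excluded by the hypothesis $1\le m<N$, and this is precisely where the assumption $m<N$ is used. Consequently, any $\Gamma$-invariant local section $s$ of $L'$ is in particular $I$-invariant, so at each point $y\in C'$ fixed by $I$ the relation $\chi^{m}(g)\,s(y)=s(y)$ (for a generator $g$ of $I$) forces $s(y)=0$. As $I$ fixes a dense open set of points of $C'$ with this nontrivial character, such an $s$ is a section of the line bundle $L'$ vanishing on a dense subset of the smooth curve $C'$, hence $s\equiv 0$. Therefore $(\psi_*L')^{\Gamma}=0$, and pushing forward along $i$ yields the asserted vanishing $(i_*\psi_*L')^{\Gamma}=0$.

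I expect the main obstacle to be the careful bookkeeping of the equivariant data: verifying that the inertia group along $C'$ is cyclic of order exactly $N$ with the stated action on the conormal direction, and confirming that the pullback factor $\psi^{*}A$ contributes trivially, so that the total inertia character is exactly $\chi^{m}$. Once this character is identified with $1\le m<N$, the vanishing of invariants is immediate. I would also note that the equality of the two expressions in the statement requires nothing beyond the factorization of $\psi$ through the inclusion $C'\hra Y$ and the commutativity of the displayed diagram, so no separate computation is needed there.
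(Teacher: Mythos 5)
Your argument is correct, but it is genuinely different from the paper's. The paper never looks at local coordinates or inertia characters: it realizes $h'_*g_*L'$ as the ideal sheaf of $mC'$ inside $(m+1)C'$ twisted by $\chi^*A$, pushes the resulting short exact sequence
$0\ra h'_*g_*L'\ra \chi^*A\ra h'_*(\phi^*A)\ra 0$
down to $X$, and uses exactness of taking $\Gamma$-invariants together with the identifications $(i_*\chi_*\chi^*A)^{\Gamma}\simeq i_*A\simeq(i_*\phi_*\phi^*A)^{\Gamma}$; since the two outer invariant sheaves coincide, the kernel's invariants vanish. That route is shorter and entirely formal, needing no equivariant bookkeeping. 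Your route instead computes the inertia character directly: along a general point of $C'$ the Kawamata cover is $w^N=z$, the inertia group is cyclic of order $N$ acting on $w$ by a primitive $N$-th root of unity, $\psi^*A$ carries the trivial stabilizer action, and $\mcO_Y(-mC')|_{C'}\simeq (w^m)/(w^{m+1})$ carries the character $\zeta\mapsto\zeta^m$, nontrivial exactly because $1\leq m<N$; hence invariant sections vanish generically and therefore identically on the smooth curve $C'$. This is a valid and in some ways more illuminating proof: it explains the dichotomy that the paper handles separately in the proof of Theorem \ref{intro-parabolic-orbifold} (twists by $-kC'$ have vanishing invariants precisely when $N\nmid k$, while the twist by $-NC'$ descends to $i_*(A\otimes\mcO_X(-C)|_C)$). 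The only points you should make explicit to close the argument are the ones you already flag: that the ramification index of the Kawamata cover along $C'$ is exactly $N$ (this is built into the construction, since $p^*C=NC'$ with $C'$ smooth and the cover tame), and that the $\Gamma$-linearization of $L'$ relevant to the lemma is the natural one on the conormal-type sheaf $(w^m)/(w^{m+1})\otimes\psi^*A$, which is how $L'$ actually arises in the paper. With those checks, your proof is complete.
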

 \begin{displaymath}
  \xymatrix{C'\ar@{^{(}->}[r]^{g}\ar[drrr]_{\psi} & mC'\ar@{^{(}->}[r]^{h'}\ar[drr]^{\phi} & (m+1)C'\ar@{^{(}->}[r]^{f'}\ar[dr]^{\chi} & NC'\ar@{^{(}->}[r]^j\ar[d]^p & Y\ar[d]^p\\
			      &			   &			    & C\ar@{^{(}->}[r]_i & X}
 \end{displaymath}
 \begin{proof}
   Corresponding to the closed immersion $h':mC'\hra (m+1)C'$, for
   $m=1,2,\cdots N-1$, we have the surjective morphism of sheaves
   $\mcO_{(m+1)C'}\twoheadrightarrow h'_*\mcO_{mC'}$. The ideal sheaf
   of this closed subscheme is $h'_*g_*\mcO_Y(-mC')|_{C'}$.  This
   gives,
 $$0\ra h'_*g_*\mcO_Y(-mC')|_{C'}\ra \mcO_{(m+1)C'}\ra h'_*\mcO_{mC'}\ra 0\,. $$
 Tensor by $\chi^*A$ (and use projection formula) to get
 $$0\ra h'_*g_*(\psi^*A\otimes\mcO_Y(-mC')|_{C'})\simeq h'_*g_*L'\ra \chi^*A\ra h'_*(\phi^*A)\ra 0\,. $$
 Pushforward this sequence to $X$ by applying $p_*j_*f'_*$:
 $$0\ra p_*j_*f'_*h'_*g_*L'\ra p_*j_*f'_*\chi^*A\ra p_*j_*f'_*h'_*\phi^*A\ra 0\,. $$
 That is, 
  $$ 0\lr i_*\psi_*L'\lr i_*\chi_*\chi^*A\lr i_*\phi_*\phi^*A\lr 0\,.$$
  Note that the above is a sequence of pushforwards to $X$ by $p$ of
  $\Gamma$-sheaves from $Y$. Therefore, we can take
  $\Gamma$-invariants of each of these sheaves. The sequence of
  $\Gamma$-invariant sheaves continues to be exact,
  cf. \cite[$\mathcal{x}$1.4.3]{AK}:
 $$0\lr (i_*\psi_*L')^{\Gamma}\lr (i_*\chi_*\chi^*A)^{\Gamma}\lr (i_*\phi_*\phi^*A)^{\Gamma}\lr 0\,.$$
 However, $(i_*\chi_*\chi^*A)^{\Gamma}\simeq i_*A\simeq (i_*\phi_*\phi^*A)^{\Gamma}$. 
 Thus, $(i_*\psi_*L')^{\Gamma}=0$.
\end{proof}
We now prove Theorem \ref{intro-parabolic-orbifold} stated in the
introduction.
\begin{proof}[Proof of Theorem \ref{intro-parabolic-orbifold}]
  We need to prove that the orbifold bundle on $Y$ corresponding to
  the dual parabolic LM bundle $(F,\mfk{F}_C, 0,\frac{N-m}{N})$ on $X$
  is the dual LM bundle $F'$. We do that by showing that the parabolic
  bundle on $X$ corresponding to $F'$ is the required one.

  We recall the commutative diagram where $1\leq m< N$:
\begin{displaymath}
  \xymatrix{C' \ar@{^{(}->}[r]^{\hspace{-0.3 cm} g}\ar[drr]_{\psi} & mC' \ar@{^{(}->}[r]^{\ f} \ar[dr]^{\phi}  & NC' \ar@{^{(}->}[r]^{j}\ar[d]^{p} & Y \ar[d]^p\\ 
    & & C\ar@{^{(}->}[r]_{i} & X}
\end{displaymath}
For all $a=1,2,\ldots,N$, we have the following exact sequence which
we will use repeatedly in the proof:
\begin{equation}\label{shortexwitha - 1}
  0\ra F'(-aC')\ra F'(-(a-1)C')\ra j_*f_*g_*\big(F'\big(-(a-1)C'\big)\big|_{C'}\big)\ra 0\,.
\end{equation}
Note that, just as in the short exact sequence \eqref{parabonF},
$F'|_{mC'}$ fits in the following short exact sequence of sheaves on
$mC'$:
\begin{equation*}
  0\lr \phi^*A\otimes\mcO_Y(-mC')|_{mC'}\lr F'|_{mC'}\lr \phi^*A^{\vee}\lr 0\,.
\end{equation*}
We restrict this to $C'$:
\begin{equation}\label{restricttoC'}
  0\lr \psi^*A\otimes\mcO_Y(-mC')|_{C'}\lr F'|_{C'}\lr \psi^*A^{\vee}\lr 0\,. 
\end{equation}
By equation \eqref{paraboliccontfiltration}, we have
$E_0=\Big(p_*F'\Big)^{\Gamma}$.  We further have that, if
$t\in (\frac{a-1}{N},\frac{a}{N}]$ where $a=1,2,\cdots, N-1, N$, then,
 $$E_t=\Big(p_*\Big(F'\otimes \mcO_Y(-aC')\Big)\Big)^{\Gamma}\,. $$
 We now compute $E_t$ as we vary $t\in [0,1]$, and this filtration of
 sheaves will completely determine the parabolic bundle on $X$ corresponding to $F'$.\\
 \\ \noindent \textbf{1.}\quad \underline{\textbf{$t=0$}}\\
 Pushforward the exact sequence \eqref{LMsequenceKawamata} to $X$ and
 consider the sequence of $\Gamma$-invariant sheaves (Taking
 $\Gamma$-invariants is an exact functor).  Since
 $p_*j_*f_*A'\simeq i_*\phi_*\phi^*A$, we get
 $$0\ra (p_*F')^{\Gamma}\ra H^0(C,A)\otimes
   (p_*\mcO_Y)^{\Gamma}\ra (i_*\phi_*\phi^*A)^{\Gamma}\ra 0\,.$$
 Thus we get,
 $$0\ra (p_*F')^{\Gamma}\ra H^0(C,A)\otimes \mcO_X\ra
 i_*A\ra 0\,.$$ Therefore $(p_*F')^{\Gamma}=E_0=F\,.$\\
 \\ \noindent \textbf{2.}\quad \underline{\textbf{$t\in \big(0,\frac{N-m}{N}\big]$}}\\
 Consider $t\in\big(\frac{a-1}{N},\frac{a}{N}\big]$ for
 $a=1,2,\ldots,N-m$. Then,
 $E_t=\Big(p_*\Big(F'\otimes \mcO_Y(-aC')\Big)\Big)^{\Gamma}\,. $ We
 consider the pushforward of the sequence \eqref{shortexwitha - 1} to
 $X$ by $p$, noting that
 $p_*j_*f_*g_*\big(F'\big(-(a-1)C'\big)\big|_{C'}\big)\simeq
 i_*\psi_*\big(F'\big(-(a-1)C'\big)\big|_{C'}\big)$, to get:
 $$ 0\ra p_*F'(-aC')\ra p_*F'(-(a-1)C')\ra i_*\psi_*F'\big(-(a-1)C'\big)\big|_{C'}\ra 0\, .$$
 Now the sheaf $i_*\psi_*\big(F'\big(-(a-1)C'\big)\big|_{C'}\big)$
 fits in the following short exact sequence that we get from
 \eqref{restricttoC'}:
\begin{multline}\label{exactwithGamma}
  0\ra
  i_*\psi_*\big(\psi^*A\otimes\mcO_Y\big((-(a-1)-m)C'\big)\big|_{C'}\big)\ra
  i_*\psi_*F'\big(-(a-1)C'\big)\big|_{C'} \\ \ra
  i_*\psi_*\big(\psi^*A^{\vee}\otimes
  \mcO_Y\big(-(a-1)C'\big)\big|_{C'}\big)\ra 0\,.
\end{multline}
For $a = 2,\ldots, N-m$, we have
$\big(i_*\psi_*\big(\psi^*A\otimes\mcO_Y\big((-(a-1)-m)C'\big)\big|_{C'}\big)\big)^{\Gamma}=0$
and
$\big(i_*\psi_*\big(\psi^*A^{\vee}\otimes
\mcO_Y\big(-(a-1)C'\big)\big|_{C'}\big)\big)^{\Gamma}=0$ by Lemma
\ref{conormalbyndle}. Hence,
$(i_*\psi_*F'(-(a-1)C')|_{C'})^{\Gamma}=0$ and thus
$(p_*F'(-aC'))^{\Gamma}=(p_*F'(-(a-1)C'))^{\Gamma}$ for
$a= 2,3,\cdots, N-m$.

  Now it is sufficient to compute $(p_*F'(-C'))^{\Gamma}$ which is in
  fact $E_t$ for $t \in (0, \frac{1}{N}]$ i.e. the case $a = 1$
  above. %We consider the short exact sequence of which we obtain by 
  Set $a = 1$ in \eqref{shortexwitha - 1} and pushforward
  the sequence of $\Gamma$-sheaves on $Y$ to $X$:
 \begin{equation}\label{weight1/n}
 0\ra p_*F'(-C')\ra p_*F'\ra p_*j_*f_*g_*(F'|_{C'})\simeq i_*\psi_*(F'|_{C'})\ra 0. 
 \end{equation}
 Next, setting $a=1$ in the short exact sequence
 \eqref{exactwithGamma}, we get:
\begin{equation}
 0\ra i_*\psi_*(\psi^*A\otimes\mcO_Y(-mC')|_{C'})\ra i_*\psi_*F'|_{C'} \ra i_*\psi_*\psi^*A^{\vee}\ra 0\,.
\end{equation}
By Lemma \ref{conormalbyndle},
$(i_*\psi_*(\psi^*A\otimes\mcO_Y(-mC')|_{C'}))^{\Gamma}=0$ since
$m < N$. Also, we have
$(i_*\psi_*\psi^*A^{\vee})^{\Gamma}=i_*A^{\vee}$. Hence,
$(i_*\psi_*F'|_{C'})^{\Gamma}\simeq i_*A^{\vee}.$ Considering the
invariants of the short exact sequence \eqref{weight1/n}, we get
$$ 0\ra (p_*F'(-C'))^{\Gamma}\ra F\ra i_*A^{\vee}\ra 0. $$
From the short exact sequence \eqref{dualLMseq}, it follows that
$(p_*F'(-C'))^{\Gamma}\simeq H^0(C,A)\otimes\mcO_X(-C)$. Thus
$E_t=H^0(C,A)\otimes\mcO_X(-C)\trm{ for }t\in(0,1/N]$ and $(p_*F'(-aC'))^{\Gamma} =(p_*F'(-C'))^{\Gamma} = H^0(C,A)\otimes\mcO_X(-C)$ for $a = 2,\ldots,N-m$. Thereby,
$$E_t=H^0(C,A)\otimes\mcO_X(-C)\quad\text{for all}\quad t\in \big(0,\tfrac{N-m}{N}\big]\,.$$

\noindent \textbf{3.}\quad \underline{\textbf{$t\in \big(\frac{N-m}{N},\frac{N-m+1}{N}\big]$}}\\
In this case, we have
$E_t=\Big(p_*\Big(F'\otimes
\mcO_Y(-(N-m+1)C')\Big)\Big)^{\Gamma}$. Since
$p_*j_*f_*g_*(F'(-(N-m)C')|_{C'})=i_*\psi_*(F'(-(N-m)C')|_{C'})$, from
\eqref{shortexwitha - 1} we have
\begin{multline}\label{1weightN/N}
 0\ra p_*F'(-(N-m+1)C')\ra p_*F'(-(N-m)C') \\ \ra i_*\psi_*(F'(-(N-m)C')|_{C'})\ra 0\,. 
\end{multline}
From \eqref{restricttoC'}, we get
 $$0\ra (i_*\psi_*(\psi^*A\otimes\mcO_Y(-NC')|_{C'}))^{\Gamma}\ra (i_*\psi_*F'(-(N-m)C')|_{C'})^{\Gamma}$$
 $$\ra (i_*\psi_*(\psi^*A^{\vee}\otimes \mcO_Y(-(N-m)C')|_{C'}))^{\Gamma}\ra 0\,.$$
 As before, the right hand side term of the above sequence is
 zero. But note that
 $(i_*\psi_*(\psi^*A\otimes\mcO_Y(-NC')|_{C'}))^{\Gamma}=
 (i_*\psi_*\psi^*(A\otimes\mcO_X(-C)|_{C}))^{\Gamma}\simeq
 i_*(A\otimes\mcO_X(-C)|_{C})$ and hence
 $(i_*\psi_*F'(-(N-m)C')|_{C'})^{\Gamma}\simeq
 i_*(A\otimes\mcO_X(-C)|_{C})$. Finally, taking $\Gamma$-invariants of
 \eqref{1weightN/N}, we get:
\begin{equation*}\label{weightN/N-1}
  0\ra (p_*F'(-(N-m+1)C'))^{\Gamma}\ra H^0(C,A)\otimes\mcO_X(-C)\ra i_*(A\otimes\mcO_X(-C)|_{C})\ra 0\,. 
\end{equation*}
From the exact sequence \eqref{LMseq}, we get
$(p_*F'(-(N-m+1)C'))^{\Gamma}\simeq F\otimes\mcO_X(-C)$.  Thus for
$t\in\big(\frac{N-m}{N},\frac{N-m+1}{N}\big]$, we get
$E_t=F\otimes\mcO_X(-C) = F(-C)$. Since this is the expected end of
the flag for the parabolic structure, we deduce, based on the known
properties of $E_t$ from Biswas \cite{IB}, that $E_t = F(-C)$ for
$t \in \big(\frac{N-m}{N}, 1\big]$.
 
\noindent  Thus, we have the following sheaves $E_t$ for $t\in [0,1]$: 
\begin{enumerate}
 \item $E_0= F$,
 \item $E_t=H^0(C,A)\otimes\mcO_X(-C)$ for $t\in\big(0,\frac{N-m}{N}\big]$,
 \item $E_t=F(-C)$ for $t\in \big(\frac{N-m}{N},1\big]$.
\end{enumerate}
As there is a jump at $t=0$ and at $t=\frac{N-m}{N}$, these give the
parabolic weights. Indeed, the parabolic vector bundle on $X$
corresponding to $F'$ is:
$$\boxed{F\supset_{0} H^0(C,A)\otimes\mcO_X(-C)\supset_{\frac{N-m}{N}} F(-C)\,}$$
or equivalently the $(F,\mfk{F}_C,0,\frac{N-m}{N})$.  This is
precisely the description of the required parabolic sheaf.
\end{proof}
The above theorem shows that the LM bundles behave well under the
parabolic-orbifold bundle correspondence.
\section{Proof of Theorem \ref{intronewsemistLMbundles}}\label{semist-appl-section}
We keep notations as in previous sections. Let $L$ be an ample line
bundle on $X$.  \newtheorem{theorem}{Corollary}[section]
\begin{theorem}\label{semist-appl}
  The dual parabolic LM bundle $(F, \mfk{F}_C,a_1,a_2)$ with $a_1=0$
  and $a_2=\frac{N-m}{N}$ on $X$ is parabolic $\mu_L$-semistable if
  and only if the corresponding Orbifold LM bundle $F'$ associated to
  the triple $(mC',\phi^*A,H^0(C,A))$ on $Y$ is
  $\mu_{p^*L}$-semistable.
\end{theorem}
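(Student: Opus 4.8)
The plan is to read this statement as a direct manifestation of Biswas's parabolic--orbifold correspondence \cite{IB}, combining the explicit identification of Theorem \ref{intro-parabolic-orbifold} with the comparison of stability notions in Proposition \ref{orbstableisstable}. Writing $F_*:=(F,\mfk{F}_C,0,\frac{N-m}{N})$, I would establish the chain of equivalences
\[ F_*\text{ parabolic }\mu_L\text{-semistable}\iff F'\text{ orbifold }\mu_{p^*L}\text{-semistable}\iff F'\ \mu_{p^*L}\text{-semistable}, \]
in which the first equivalence is the slope-preserving content of the correspondence and the second is exactly Proposition \ref{orbstableisstable}.

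First I would invoke Theorem \ref{intro-parabolic-orbifold}: the orbifold bundle attached to $F_*$ by the construction \eqref{paraboliccontfiltration} is precisely $F'$. The only nonzero weight $\frac{N-m}{N}$ has denominator dividing $N$, hence lies in the range of weights realized by the degree-$N$ ramification of the Kawamata cover $p:Y\ra X$ along $C$ (recall $p^*C=NC'$), so the correspondence applies verbatim. The correspondence is functorial and compatible with subobjects: a parabolic subsheaf $G_*\subset F_*$ carrying its induced structure corresponds to a $\Gamma$-saturated subsheaf $G'\subset F'$, and conversely every $\Gamma$-saturated subsheaf of $F'$ descends, via $\Gamma$-invariants of the filtration \eqref{paraboliccontfiltration}, to such a parabolic subsheaf. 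This bijection preserves ranks.

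The heart of the matter is the slope comparison. Using the projection formula together with $(p^*L)^{n-1}=p^*(L^{n-1})$ and $p_*p^*=\deg(p)\cdot\mathrm{id}$, I would verify that there is a fixed positive constant $c$, independent of $G'$, with $\mu_{p^*L}(G')=c\cdot\trm{par}\mu_L(G_*)$ for every $\Gamma$-saturated $G'\subset F'$ and its descended parabolic subsheaf $G_*$, the same relation holding for $F'$ and $F_*$. The content of this computation is that the weight term $\mu\trm{-wt}$ appearing in $\trm{par}\mu_L$ accounts exactly for the part of $c_1(F')\cdot(p^*L)^{n-1}$ contributed by the ramification divisor $C'$; this is the bookkeeping of \cite{IB} specialized to the explicit filtration $\{E_t\}$ computed in Theorem \ref{intro-parabolic-orbifold}. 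Granting this, $\trm{par}\mu_L(G_*)\leq\trm{par}\mu_L(F_*)$ is equivalent to $\mu_{p^*L}(G')\leq\mu_{p^*L}(F')$, and since the passage between subsheaves is a bijection, parabolic $\mu_L$-semistability of $F_*$ is equivalent to orbifold $\mu_{p^*L}$-semistability of $F'$.

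Finally I would apply Proposition \ref{orbstableisstable}, which identifies orbifold semistability of the orbifold bundle $F'$ with its $\mu_{p^*L}$-semistability in the Mumford--Takemoto sense; composing with the preceding equivalence yields the asserted ``if and only if''. The step I expect to be the main obstacle is precisely the slope comparison, since one needs the equivalence of the two slope inequalities in both directions (so that semistability transfers each way); this forces one to match the parabolic weight contribution on $X$ against the ramification contribution to the degree on $Y$. Everything else is formal, given Theorem \ref{intro-parabolic-orbifold} and the correspondence of \cite{IB}.
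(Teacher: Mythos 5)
Your proposal is correct and follows essentially the same route as the paper: the paper's proof is exactly the chain you describe, citing Biswas's Lemma 3.13 for the equivalence between parabolic semistability of $F_*$ and orbifold semistability of the corresponding orbifold bundle (identified with $F'$ via Theorem \ref{intro-parabolic-orbifold}), and then \cite[Lemma 2.7]{IB} (Proposition 2.11 in the paper) to pass from orbifold semistability to Mumford--Takemoto $\mu_{p^*L}$-semistability. The only difference is that you sketch the internals of the slope comparison underlying Biswas's lemma, whereas the paper simply cites it.
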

\begin{proof} This is a direct consequence of \cite[Lemma 3.13]{IB}
  and the fact that an orbifold bundle is orbifold semistable with
  respect to $p^*L$ if and only if it is $\mu_{p^*L}$-semistable on
  $Y$, cf. \cite[Lemma 2.7]{IB}.
\end{proof}
Consequently, we prove Theorem \ref{intronewsemistLMbundles}.
\begin{proof}[Proof of Theorem \ref{intronewsemistLMbundles}]
  Keep notations as in the statement of the Theorem.
 \begin{enumerate}
 \setlength\itemsep{0.3 cm}
\item[(a)] In this case, $X=\mbb{P}^2$ and $C\in |\mcO(d)|$ is a
  general smooth degree $d$ curve, where $d$ is odd. Let
  $A=\mcO(rd)|_C$ for any $r\geq 1$, and $V\subset H^0(C,A)$ be a
  general two dimensional subspace. Let $F$ be the dual LM bundle on
  $X$ corresponding to $(C,A,V)$ which is $\mu_{\mcO(1)}$-stable,
  cf. \cite[Theorem 1.2]{NP2}.  Given that $m$ is an integer such that
  $\frac{N-m}{N}<\frac{1}{d}$.  Consider the dual parabolic LM bundle
  $F_*=(F,\mfk{F}_C,a_1,a_2)$ where the weights $a_1=0$ and
  $a_2=\frac{N-m}{N}<\frac{1}{d}$.  By Corollary
  \ref{projectivespparabolic}, $F_*$ is parabolic
  $\mu_{\mcO(1)}$-stable.  By Corollary \ref{semist-appl}, the dual LM
  bundle on $Y$ corresponding to the triple $(mC',A', V)$ on $Y$ is
  $\mu_{p^*\mcO(1)}$-semistable.
\item[(b)] Here $X$ is a smooth projective K3 surface and $L$ is an
  ample line bundle on $X$ such that a general curve $C\in | L |$ has
  genus $g$, Clifford dimension $1$ and maximal gonality $k $.  We
  have $\rho ( g, 1, d ) > 0$. For a general smooth $C\in |L|$ ,
  consider the rank $2$ dual LM bundle $F:=F_{C,A}$ associated with a
  general complete, base-point free $g_d^1$, say $A$ on $C$.  By
  \cite{ML}, $F$ is $\mu_L$-stable. We have an integer $m$ such that
  $\frac{N-m}{N}<\frac{1}{g-1}$.  Then, by Corollary
  \ref{parabolicK3stable}, the dual parabolic LM bundle
  $(F,\mfk{F}_C,a_1,a_2)$ with weights $a_1=0$ and
  $a_2=\frac{N-m}{N}<\frac{1}{g-1}$ is parabolic $\mu_L$-stable.  Then
  by Corollary \ref{semist-appl}, the LM bundle on $Y$ corresponding
  to the triple $(mC',A',H^0(C,A))$ is $\mu_{p^*L}$-semistable.
\item[(a')] As a particular case of (a), let $C\in |\mcO(1)|$ be a
  line, $A$ is a globally generated line bundle on $C$ and $V$ be as
  earlier.  Again, the dual LM bundle $F$ corresponding to $(C,A,V)$
  is $\mu_{\mcO(1)}$-stable. By Corollary \ref{projectivespparabolic},
  the dual parabolic LM bundle $(F,\mfk{F}_C,a_1,a_2)$ is parabolic
  $\mu_{\mcO(1)}$-stable for any weights $a_i$ such that $a_2-a_1<1$.
  Set $a_1=0$ and $a_2=\frac{N-1}{N}$, i.e. $m=1$ in part (a).
  
  By part (a) the dual LM bundle $F'$ on $Y$ corresponding to the
  triple $(C', A', V)$ is $\mu_{p^*\mcO(1)}$-semistable on $Y$.  Let
  $l=\trm{deg}\,A'$. There is a flat family of LM bundles on $Y$
  parametrized by an open subset of the elements of the Brill-Noether
  variety $\mathcal{G}^1_l(|\mcO_Y(C')|)$, cf.
  \cite[$\mathcal{x}\,3$]{NP1}. Since semistability of vector bundles
  is an open condition in flat families \cite[Prop. 2.3.1]{HL}, there
  is an irreducible component of $\mathcal{G}^1_l(|\mcO_Y(C')|)$
  corresponding to $\mu_{p^*\mcO(1)}$- semistable LM bundles.
 
\item[(b')] We consider a particular case of (b), where $L$
  additionally satisfies $L^2=2$. Then a general curve $C\in | L |$
  has genus 2, Clifford dimension $1$ and maximal gonality $k $.
  Suppose $A$ is a complete base-point free $g^1_d$ on $C$ where $d$
  is a positive integer such that the Brill-Noether number
  $\rho(2,1,d)>0$ (note that $d=3$ satisfies the requirement that
  $\rho(2,1,d)>0$ and completeness). Then the dual LM bundle
  $F:=F_{C,A}$ associated to a general $C$ and $A$ is $\mu_L$-stable.
  By Corollary \ref{parabolicK3stable}, the dual parabolic bundle
  $(F,\mfk{F}_C,a_1,a_2)$ is parabolic $\mu_{L}$-stable for any
  weights $a_i$ such that $a_2-a_1<\frac{1}{g-1}=1$.
  
  Set weights $a_1=0$ and $a_2=\frac{N-1}{N}$, i.e. $m=1$ in case (b).
  Then $(F,\mfk{F}_C,0,\frac{N-1}{N})$ is parabolic $\mu_L$-stable.
  Just as in part (b), the dual LM bundle on $Y$ corresponding to the
  triple $(C',A',H^0(C,A))$ is $\mu_{p^*L}$-semistable.  Consider the
  flat family of LM bundles parametrized by the elements of an open
  subset of the Brill-Noether variety
  $\mathcal{G}^1_{l}(|\mcO_Y(C')|)$, cf. \cite[$\mathcal{x}\,3$]{NP1}.
  Here $l=\trm{deg}\, A'$.  Since semistability of vector bundles is
  an open condition in flat families \cite[Prop. 2.3.1]{HL}, there is
  an irreducible component of $\mathcal{G}^1_{l}(|\mcO_Y(C')|)$
  corresponding to $\mu_{p^*L}$-semistable LM bundles.
 \end{enumerate}
\vspace{-20pt}
\end{proof}
\newtheorem{egK3surface}[theorem]{Remark}
\begin{egK3surface}
  Consider $\sigma:X\ra \mbb{P}^2$, a K3 surface which is a double
  cover of the projective plane branched along a smooth sextic curve,
  and $L=\sigma^*\mcO_{\mbb{P}^2}(1)$. Then $X$ and $L$ satisfy the
  requirements of part (b') of the above theorem. That is, $(L^2)=2$
  and a general curve $C\in |L|$ has genus 2, Clifford dimension 1 and
  constant gonality 2, cf. \cite[Theorem A and Prop. 3.3]{CP}.
\end{egK3surface}


\begin{thebibliography}{TN}
\bibitem{AF} M. Aprodu and G. Farkas, \emph{Green's conjecture for curves on arbitrary K3 surfaces}, Compos. Math. \textbf{147}(3), 839-851, 2011.
\bibitem{UB} U. Bhosle, \emph{Parabolic sheaves on higher dimensional
    varieties}, Math. Ann. \textbf{293}, 177-192, 1992.
\bibitem{IB} I. Biswas, \emph{Parabolic bundles as orbifold bundles},
  Duke Math. J., \textbf{88}(2), 305-325, 1997.
\bibitem{Br} R. Brussee, \emph{Stable bundles on blown up surfaces},
  Math. Z., \textbf{205}, 551-565, 1990.
\bibitem{CP} C. Ciliberto and G. Pareschi, \emph{Pencils of minimal
    degree on curves on a K3 surface}, J. Reine Angew. Math.,
  \textbf{460}, 15-36, 1995.
\bibitem{GL} M. Green and R. Lazarsfeld, \emph{Special divisors on
    curves on a K3 surface}, Invent. Math., \textbf{89}(2), 357-370,
  1987.
\bibitem{RH1} R. Hartshorne, \emph{Stable Reflexive Sheaves},
  Math. Ann. \textbf{254} (2), 121-176, 1980.
\bibitem{HL} D. Huybrechts and M. Lehn, \emph{The geometry of moduli
    spaces of sheaves}, 2nd ed., Cambridge University Press,
  Cambridge, 2010.
\bibitem{Kn} A. L. Knutsen, \emph{On two conjectures for curves on K3
    surfaces}, Int. J. Math. \textbf{20}(12), 1547-1560, 2009.
\bibitem{AK} A. Krug, \emph{Extension groups of tautological sheaves
    on {H}ilbert schemes}, J. Algebraic Geom., \textbf{23}(3),
  571-598, 2014.
\bibitem{RL} R. Lazarsfeld, \emph{Brill-Noether-Petri without
    degenerations}, J. Diff. Geom. \textbf{23}, 299-307, 1986.
\bibitem{Laz} R. Lazarsfeld, \emph{Positivity in algebraic geometry I
    \& II}, Ergebnisse der Mathematik und ihrer Grenzgebiete, vol. 48
  \& 49, Springer-Verlag, Berlin, 2004.
\bibitem{ML} M. Lelli-Chiesa, \emph{Stability of rank-3
    Lazarsfeld-Mukai bundles on K3 surfaces},
  Proc. Lond. Math. Soc. \textbf{107} (3), 451-479, 2013.
\bibitem{MY} M. Maruyama and K. Yokogawa, \emph{Moduli of parabolic
    stable sheaves}, Math. Ann., \textbf{293}(1), 77-99, 1992.
\bibitem{MS} V. B. Mehta and C. S. Seshadri, \emph{Moduli of vector
    bundles on curves with parabolic structures}, Math. Ann.,
  \textbf{248}(3), 205-239, 1980.
\bibitem{Mu} S. Mukai, \emph{Biregular classification of Fano
    threefolds and Fano manifolds of coindex 3},
  Proc. Nat. Acad. Sci. USA, \textbf{86}, 3000-3002, 1989.
\bibitem{Nak} T. Nakashima, \emph{Moduli of stable bundles on blown up
    surfaces}, J. Math. Kyoto Univ., 571-581, 1993.
\bibitem{NP1} P. Narayanan, \emph{On the semistability of certain
    Lazarsfeld--Mukai bundles on abelian surfaces}, Annali
  dell'Universita di Ferrara, 2017, doi:10.1007/s11565-017-0277-z.
\bibitem{NP2} P. Narayanan, \emph{Lazarsfeld-Mukai reflexive sheaves
    and their stability}, Communications in Algebra, 2017,
  doi:10.1080/00927872.2017.1354009.
\bibitem{Se} C. S. Seshadri, \emph{Moduli of vector bundles on curves
    with parabolic structures}, Bull. Amer. Math. Soc.,
  \textbf{83}(1), 124-126, 1977.
\bibitem{CV1} C. Voisin, \emph{Green’s generic syzygy conjecture for
    curves of even genus lying on a K3 surface}, J. European
  Math. Soc. \textbf{4}, 363-404, 2002.
\bibitem{CV2} C. Voisin, \emph{Green’s canonical syzygy conjecture for
    generic curves of odd genus}, Compositio Math. \textbf{141},
  1163-1190, 2005.
\end{thebibliography}
\end{document}